%% file: main.tex
\PassOptionsToPackage{hypertexnames=false}{hyperref}
\documentclass[opre,nonblindrev]{informs4_revised}
\OneAndAHalfSpacedXI 

\usepackage{bbm}
\usepackage{fix-cm}
\newif\ifarxiv
\arxivtrue

\input{premeable}

\renewcommand{\Pr}{\mathbb{P}}
\providecommand{\E}{\mathbb{E}}
\providecommand{\R}{\mathbb{R}}

\providecommand{\cA}{\mathcal{A}}

\providecommand{\cA}{\mathcal{A}}
\providecommand{\cP}{\mathcal{P}}

\renewcommand{\d}{\mathrm{d}}

\newcommand{\TSP}{\textsf{TSP}}

\usepackage[normalem]{ulem}

\usepackage{natbib}
\bibpunct[, ]{(}{)}{,}{a}{}{,}
\def\bibfont{\small}

\definecolor{longhorn}{rgb}{0.8, 0.33, 0.0}

\TheoremsNumberedThrough     
\ECRepeatTheorems

\EquationsNumberedThrough   
\usepackage{amssymb}
\usepackage{graphicx}
\usepackage{pgfplots}
\usepackage{hyperref}
\hypersetup{
	colorlinks=true,
	linkcolor=blue,
	filecolor=blue,      
	urlcolor=red,
	citecolor=blue,
}

\usepackage{tikz}
\usetikzlibrary{patterns,arrows.meta,decorations.markings,calc}
\definecolor{tourblue}{rgb}{0,0.447,0.698}
\definecolor{delorange}{rgb}{0.835,0.369,0}
\usepackage{mathrsfs}
\usepackage{bbm}
\usepackage{multirow}
\usepackage{diagbox}
\usepackage{extpfeil}
\usepackage{tabularx}
\usepackage{color}
\usepackage{subcaption}
\usepackage{booktabs} 
\usepackage{array}

\providecommand{\Poi}{\operatorname{Poisson}}
\providecommand{\Exp}{\operatorname{Exp}}
\providecommand{\Unif}{\operatorname{Unif}}
\providecommand{\cC}{\mathcal{C}}
\providecommand{\cQ}{\mathcal{Q}}

\begin{document}
	
	\ABSTRACT{
		The Beardwood-Halton-Hammersley theorem characterizes the asymptotic length of the optimal Euclidean traveling salesman tour through independent and uniformly distributed random points $X_1,\ldots,X_n$ in the unit square. It states that there exists a universal constant $\beta$ such that the length of the shortest tour is asymptotic to $\beta \sqrt{n}$ almost surely. The best bounds established to date are $0.6277\leq \beta \leq 0.90367$. In this paper, we improve these bounds to $0.6421\leq\beta\leq0.8810$. Using importance sampling, we further show that $0.6536\leq \beta\leq 0.8749$ holds with probability at least $1-2\times 10^{-4}$. Here the probability is taken with respect to the randomness of the sampling procedure. 
	}
	
	\KEYWORDS{Euclidean traveling salesman problem; stochastic geometry; BHH theorem; asymptotics}

	\RUNTITLE{New Bounds for the Euclidean TSP Constant}
	\TITLE{New Bounds for the Euclidean TSP Constant}

	\ARTICLEAUTHORS{
    \AUTHOR{Zhuolun Dong$^{1}$, Junyu Cao$^{1}$}
    \AFF{$^{1}$McCombs School of Business, The University of Texas at Austin. \\ \EMAIL{dongzhuolun@utexas.edu, junyu.cao@mccombs.utexas.edu}.} 
    } 

    \RUNAUTHOR{Zhuolun Dong, Junyu Cao}
	
	\maketitle
	
	\section{Introduction}\label{sec:introduction}
	
	Let $X_1,\ldots,X_n$ be independent uniform points in the unit square. Denote the length of their shortest Euclidean traveling salesman tour by $\TSP(X_1,\ldots,X_n)$. The Beardwood-Halton-Hammersley (BHH) Theorem \citep{BHH1959} estimates this length in the asymptotic regime, as follows.
	\begin{theorem}[BHH Theorem]\label{thm1}
		There exists a universal constant $\beta$ such that
		\begin{align}\label{eq:bhh}
			\lim_{n\rightarrow\infty}\frac{\TSP(X_1,\ldots,X_n)}{\sqrt n}=\beta
		\end{align}
		holds almost surely. Moreover, $\beta$ satisfies $0.625\leq \beta\leq 0.9212$.
	\end{theorem}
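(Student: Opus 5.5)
The plan has two independent parts: existence of the limit, and the numerical bounds $0.625\le\beta\le 0.9212$.

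\textbf{Existence of the limit.} I would first Poissonize. Let $\Pi_\lambda$ be a Poisson process of intensity $\lambda$ on $[0,1]^2$, or equivalently $\Pi_1$ restricted to $[0,t]^2$ with $t^2=\lambda$, and set $L(t)=\E\,\TSP(\Pi_1\cap[0,t]^2)$.

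\begin{enumerate}
\item Partition $[0,t]^2$ into $m^2$ subsquares of side $t/m$. Join the optimal subtours of the subsquares by a boustrophedon path. This gives the subadditive-type inequality
\[
L(t)\le m^2 L(t/m)+C\,m\,t .
\]
\item Fekete-type arguments then show that $L(t)/t^2$ converges to some $\beta$. The error term $Cmt/t^2$ vanishes after dividing, provided $m$ is chosen appropriately along a sequence.
\item A matching inequality in the other direction is needed to upgrade the one-sided limit. I would obtain it from a boundary functional in the style of Redmond--Yukich, namely the tour in which travel along the square's boundary is free. This boundary version is superadditive, and the two functionals differ by $O(\sqrt{\text{boundary points}})=o(t^2)$.
\item De-Poissonization: $\TSP$ is monotone and $1$-Lipschitz-in-$\sqrt{\cdot}$ under adding points, since adding $k$ points costs at most $O(\sqrt{k})$ via the space-filling curve bound. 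Hence $n$ uniform points and $\Poi(n)$ points give the same $\sqrt n$ asymptotics.
\item For almost sure convergence I would use concentration. Talagrand's inequality, or Rhee's martingale bound, gives $\Pr(|\TSP-\E\TSP|>s)\le Ce^{-cs^2}$, because the functional is smooth in the sense that changing one point alters it by $O(\text{local spacing})$. Borel--Cantelli then finishes along all $n$.
\end{enumerate}

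\textbf{Lower bound.} Every point contributes two incident tour edges, each at least its nearest-neighbour distance $D_1$, and a sharper version uses the first and second nearest neighbours. This gives
\[
\TSP\ge \tfrac12\sum_i (D_1^{(i)}+D_2^{(i)}).
\]
With Poisson intensity $1$ one has $\E D_1=1/2$ and $\E D_2=3/4$, giving $\beta\ge 5/8=0.625$.

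\textbf{Upper bound.} Use a strip heuristic. Cut the square into horizontal strips of width $w/\sqrt n$ and traverse the points in each strip in $x$-order, alternating direction between strips. The expected gap between consecutive points in a strip is an explicit integral involving $\sqrt{\Delta x^2+\Delta y^2}$ with $\Delta y$ the difference of two uniforms on $[0,w]$ and $\Delta x\sim\Exp(w)$. Optimizing over $w$ gives about $0.9204$–$0.9212$ (BHH's constant).

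\textbf{Main obstacle.} I expect the hardest step to be the matching superadditive bound in the existence proof (step 3). The numerical bounds, by contrast, are computation.
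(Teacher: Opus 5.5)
The paper does not prove this statement. It quotes it as the classical BHH theorem and refers to Redmond--Yukich and Daganzo for proofs. Your outline is the standard route those references follow (Poissonization, geometric subadditivity, de-Poissonization, Gaussian concentration plus Borel--Cantelli), and it is sound as a proof sketch.

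Your two numerical ingredients match what the paper itself later re-derives:
\begin{itemize}
\item Your bound $\TSP\ge\frac12\sum_i(D_1^{(i)}+D_2^{(i)})$ integrates to the term $\int_0^\infty d(r)\,\mathrm{d}r=5/8$ in the proof of Theorem~\ref{thm:lower-master}, because $\Pr(D_1>r)=\Pr(Z_r=0)$ and $\Pr(D_2>r)=\Pr(Z_r\le 1)$.
\item Your strip heuristic is Theorem~\ref{thm:strip-upper} with the routing rule that always visits $P_1,\dots,P_4$ in order. This gives $\beta\le\E\sqrt{\Delta^2+(Y_1-Y_0)^2}$ with $\Delta\sim\Exp(H)$.
\end{itemize}

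Two corrections are worth making.

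\textbf{Step 3 is unnecessary, and the main obstacle is misplaced.} Since $L$ is nondecreasing (by shortcutting), step 1 with $m=\lceil t/s\rceil$ gives $L(t)\le L(ms)\le m^2\bigl(L(s)+Cs\bigr)$. Hence $\limsup_{t\to\infty}L(t)/t^2\le L(s)/s^2+C/s$ for every $s$, so $\limsup\le\liminf$ and the limit exists. The superadditive boundary functional buys rates of convergence, not existence.

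The real weight sits in step 5. A plain bounded-difference martingale bound is useless there: moving one point can change the tour by order the diameter, so it only controls deviations of order $\sqrt n$. You genuinely need the Rhee--Talagrand deviation inequality, or Talagrand's convex-distance argument. Your ``local spacing'' intuition holds only in that weighted, averaged sense, not worst-case.

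\textbf{The optimized single-strip value is not as low as $0.9204$.} It is about $0.92116$, attained near $w=\sqrt3$, which is exactly BHH's constant. The margin below $0.9212$ is therefore only about $4\times10^{-5}$. The integrand's inner expectation has a $c^2\log c$ singularity at small vertical offset $c$, so the integral must be evaluated with rigorous error control to certify the stated bound; a naive coarse quadrature overshoots $0.9212$.
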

	We refer readers to \cite{redmond1994limit} and \cite{daganzo2005logistics} for detailed proofs. Despite extensive progress on exact and heuristic TSP algorithms \citep{cook2010fifty,applegate2003chained,asadpour2017log}, the exact value of $\beta$ remains unknown. Recent numerical studies \citep{ApplegateEtAl2006,applegate2010using} show that the TSP constant $\beta\approx 0.7124$ empirically. So far, the tightest bounds established are $0.6277\leq\beta\leq0.90367$, due to \citet{GaudioJaillet2020} and \citet{GaudioGuan2026}.

	In this paper, we establish tighter upper and lower bounds:
	\begin{theorem}[Main result]\label{thm:main}
		The Euclidean TSP constant $\beta$ satisfies
		\begin{align}\label{eq:main-det}
			0.6421\leq \beta\leq 0.8810.
		\end{align}
		In addition, through importance sampling, we obtain that the following inequality holds with probability at least $1-2\times 10^{-4}$:
		\begin{align}\label{eq:main-hp}
			0.6536\leq \beta\leq 0.8749,
		\end{align}
		where the probability is taken with respect to the randomness of the sampling procedure.
		
	\end{theorem}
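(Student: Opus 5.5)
The plan has two independent halves: a lower bound and an upper bound. Both work in the Poissonized setting. By the BHH theorem (Theorem~\ref{thm1}) and standard de-Poissonization, $\beta=\lim_{\lambda\to\infty}\E[\TSP(\Pi_\lambda)]/\sqrt{\lambda}$, where $\Pi_\lambda$ is a Poisson process of intensity $\lambda$ on the unit square. Equivalently, by scaling, $\beta$ is the per-point tour cost for a unit-intensity Poisson process on a large square.

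\textbf{Lower bound.} We follow the Beardwood--Halton--Hammersley / Gaudio--Jaillet template. In any tour, each point $x$ is joined to exactly two neighbours, so
\[
\TSP \ge \tfrac12\sum_x \bigl(d_1(x)+d_2(x)\bigr),
\]
where $d_k(x)$ is the distance to the $k$-th nearest neighbour. This naive bound gives $0.625$. To improve it, we add correction terms that account for configurations in which the two nearest neighbours cannot both be used. Examples are a pair $x,y$ that are mutually nearest neighbours, or a small cluster of three points in which the triangle edges would create a subtour. In those cases at least one of the two edges at some point must be longer, at least the third-nearest-neighbour distance or a suitably chosen surrogate.

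Concretely, we define a local charging scheme $\ell(x)\le$ (half the tour edges at $x$), where $\ell(x)$ depends only on the configuration within a bounded random neighbourhood of $x$. Its expectation under the Palm distribution is then an explicit integral over the positions of a few neighbours, with Poisson void probabilities given by areas of unions of disks. We evaluate this integral by rigorous numerical quadrature with interval bounds, which yields $0.6421$.

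\textbf{Upper bound.} We use a strip or partition heuristic, as in Few's and Gaudio--Guan's analyses, refined by local improvement. Partition the square into horizontal strips of width $w/\sqrt{\lambda}$ and traverse each strip in a zigzag through its points in order of $x$-coordinate. The expected cost per point is an explicit integral in $w$, classically minimized to about $0.92$. We then apply local modifications whose expected savings can be computed exactly from Poisson statistics of finitely many consecutive points. Examples are $2$-opt or $k$-point reorderings within windows of consecutive points, and letting paths in adjacent strips exchange points near the boundary. Since the gain is nonnegative pointwise whenever we only accept improving moves, any lower bound on the expected saving gives a valid upper bound. We optimize the parameters (strip width, window size) and evaluate the resulting integrals with certified numerics to reach $0.8810$.

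\textbf{Probabilistic bounds and main obstacle.} For the high-probability bounds, the expected gain of richer local rules, for both the lower and upper bound, involves high-dimensional integrals with no closed form. We estimate them by Monte Carlo, using importance sampling to concentrate samples on the rare but important configurations. Because the integrands are bounded, or have controlled tails after truncation, a Hoeffding or Bernstein confidence bound with failure probability $10^{-4}$ for each side gives the stated bounds with a union-bound total of $2\times10^{-4}$.

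I expect the main obstacle to be the lower bound. Designing a charging rule that is \emph{provably} a valid lower bound on every tour (every subtour-type case must be handled without double counting), while remaining local enough that its expectation reduces to tractable integrals, is delicate. The rigorous error control of the resulting multi-dimensional quadrature is similarly delicate. For this I would first try enumerating configurations of up to three or four nearest neighbours, and bounding the rest by the naive $d_1+d_2$ term.
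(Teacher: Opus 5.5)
Your proposal has a genuine gap in the lower bound. The $0.625$ baseline is correct, and it is exactly the paper's $5/8$ term, since $\frac12(d_1(x)+d_2(x))=\frac12\int_0^\infty(2-\deg_r(x))_+\,dr$, where $\deg_r(x)$ is the degree of $x$ in $G_r$.

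The improvement mechanism, however, cannot work as you formulate it. Suppose $\ell(x)$ depends only on the configuration and must satisfy $\ell(x)\le\frac12(e_1(x)+e_2(x))$ for every tour, where $e_1(x),e_2(x)$ are the lengths of the two tour edges at $x$. For each $x$ there is a Hamiltonian cycle that uses the edges from $x$ to its two nearest neighbours. This forces $\ell(x)\le\frac12(d_1(x)+d_2(x))$ pointwise, so no per-vertex rule can ever exceed $5/8$.

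The subtour obstruction is a property of a cluster, not of a vertex. In an isolated triangle at scale $r$, two long edge-ends must occur somewhere in the triangle, but where they occur depends on the tour. A valid inequality therefore has to be imposed in aggregate over clusters, and you give no rule that does this without double counting. Your mutually-nearest pair example yields nothing: an isolated pair is covered by one path, which matches the degree count exactly, and the first genuine correction is the triangle. The value $0.6421$ is therefore asserted rather than derived. The Monte Carlo lower bound has the same problem, because there is no proven functional whose mean is being estimated.

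The missing idea is the paper's scale-by-scale, component-level count. By the layer-cake identity, $\ell(H)=\int_0^\infty N_H(r)\,dr$. At each $r$, deleting the tour edges longer than $r$ leaves paths, each lying inside one component of $G_r(X)$. This gives $N_H(r)\ge\sum_{C\in\cC(G_r(X))}p(C)-1$, and then $p(C)\ge q(C)=\max\{1,\lceil n_1(C)/2\rceil,\max_S(\operatorname{cc}(C-S)-|S|)\}\ge D(C)$. The $D$-part reproduces your $\frac12(d_1+d_2)$ bound. The remainder $g=q-D\ge 0$ is a correction that holds for every tour automatically. Keeping $g$ only on isolated components of size $k$ and applying the multivariate Mecke formula yields $\beta\ge\frac58+\sum_{k\ge3}C_k$, with explicit disk-union integrals. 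Box-partition bounds on $C_3,C_4,C_5$ give $0.6421$, and tree-based importance sampling of $C_3,\ldots,C_{10}$ with one-sided clipping gives $0.6536$.

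Your upper-bound plan belongs to the same family as the paper's. It is left unspecified at exactly the step where certification is hard: lower-bounding the expected saving of a best reordering. The paper handles this with three devices:
\begin{itemize}
\item disjoint blocks of five new points sharing endpoints, which makes block costs i.i.d.;
\item a routing rule that depends only on an equiprobable cell $\sigma$;
\item the Jensen bound $\E[\|P_s-P_r\|\mid\sigma]\le\sqrt{M_{rs}(\sigma)}$ with closed-form second moments, evaluated at $H=2$, $N_g=12$, $N_y=20$.
\end{itemize}
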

	
	Regarding the lower bound, we relate tour length to the connectivity of random geometric graphs. This yields a new lower bound on $\beta$ in the form of an infinite series of integrals. We evaluate the first few terms by a numerical method that partitions the integration domain into subregions and bounds the integrand conservatively on each.
	
	For the upper bound, we refine the classical strip construction. We divide the square into horizontal strips and process the points of each strip in blocks of size five. In each block, we establish an upper bound for the expected length and choose the visiting order that minimizes this value. The resulting expectation is an eleven-dimensional integral, which we bound by a numerical method.

	The remainder of the paper is organized as follows. Section~\ref{sec:literature} reviews the relevant literature. Section~\ref{sec:lower} proves the lower bounds. Section~\ref{sec:upper} proves the upper bounds.

	\section{Literature Review}\label{sec:literature}
	
	\citet{BHH1959} established the BHH Theorem and obtained the bounds $0.625 \le \beta \le 0.9212$. The same limiting behavior is also observed in a range of related Euclidean functionals, including the minimum spanning tree, the Steiner tree, and minimum weight matching \citep{Steele1997, Yukich1998}. Determining the value of these constants plays a central role in continuous approximation analysis of logistics systems \citep{daganzo2005logistics}. However, no closed form has been found for the TSP constant $\beta$.  Currently, the best bounds for $\beta$ are $0.6277 \le \beta \le 0.90367$. \citet{Steinerberger2015} later sharpened the BHH upper bound and proposed a refinement of the BHH lower-bound argument, which \citet{GaudioJaillet2020} corrected to obtain the lower bound $0.6277$. The upper bound comes from a line of work that optimizes the strip construction: \citet{CarlssonYu2025} allow swapping the visiting sequence of neighboring points in the strip, and \citet{GaudioGuan2026} further allow swapping points in adjacent strips.

	\section{Derivation of the Lower Bound}\label{sec:lower}
	
	In this section, we prove the lower bounds in Theorem~\ref{thm:main}. We first show that we can replace the $n$ points by a unit-intensity Poisson process and then derive a lower bound from path covers of the Poisson points. In Section~\ref{subsec:lower-components}, we establish a lower-bound formula for the TSP constant. Sections~\ref{subsec:lower-det} and~\ref{subsec:lower-hp} derive a deterministic and probabilistic bound, respectively.
	
	\subsection{Preliminaries}\label{subsec:lower-graph}
	
	To begin with, we note that we can replace the $n$ points by a Poisson process, as follows:

	\begin{lemma}[\citet{BHH1959,Steele1997}]\label{lem:poisson-unit}
		Let $\cP_n$ be a Poisson process with intensity $n$ on the unit square $[0,1]^2$, and let $\TSP(\cP_n)$ be the length of the TSP tour among all the points. Then,
		\begin{align}\label{eq:poisson-unit}
			\lim_{n\to\infty}\frac{\E[\TSP(\cP_n)]}{\sqrt n}=\beta.
		\end{align}
	\end{lemma}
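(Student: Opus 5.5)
We will derive the lemma from Theorem~\ref{thm1} by conditioning on the number of Poisson points. Write $N_n=|\cP_n|\sim\Poi(n)$. Conditionally on $N_n=k$, the points of $\cP_n$ are $k$ i.i.d.\ uniform points in $[0,1]^2$, so
\begin{align*}
\E[\TSP(\cP_n)]=\sum_{k\ge 0}\Pr(N_n=k)\,a_k,\qquad a_k:=\E[\TSP(X_1,\ldots,X_k)].
\end{align*}
Everything therefore reduces to two facts: $a_k/\sqrt k\to\beta$, and $a_k$ grows slowly enough that the Poisson average of $a_k$ behaves like $a_n$.

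\textbf{Step 1: $a_k/\sqrt k\to\beta$.} Theorem~\ref{thm1} gives almost sure convergence of $\TSP(X_1,\ldots,X_k)/\sqrt k$ to $\beta$. To pass to expectations, we use the classical deterministic bound $\TSP(x_1,\ldots,x_k)\le C\sqrt k + C'$, valid for any $k$ points in the unit square. This follows from the strip construction: use $m\approx\sqrt{k}$ horizontal strips, traverse each strip left to right, and bound the vertical and horizontal travel separately. The bound shows that $\TSP/\sqrt k$ is uniformly bounded, so dominated convergence yields $a_k/\sqrt k\to\beta$. The same deterministic bound gives $a_k\le C\sqrt k+C'$ for all $k$.

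\textbf{Step 2: Poisson averaging.} Fix $\varepsilon>0$ and let $I_n=\{k:|k-n|\le n^{2/3}\}$. For $k\in I_n$, Step 1 gives $a_k=\sqrt k(\beta+o(1))=\sqrt n(\beta+o(1))$, uniformly in $k$, because $\sqrt k/\sqrt n\to1$ uniformly on $I_n$. The contribution from $k\notin I_n$ is at most
\begin{align*}
\E\bigl[(C\sqrt{N_n}+C')\mathbbm{1}\{|N_n-n|>n^{2/3}\}\bigr]\le \bigl(C\sqrt{\E N_n}+C'\bigr)\Pr(|N_n-n|>n^{2/3})^{1/2},
\end{align*}
by Cauchy--Schwarz. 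By Chebyshev, $\Pr(|N_n-n|>n^{2/3})\le n^{-1/3}$, so this term is $O(\sqrt n\cdot n^{-1/6})=o(\sqrt n)$. Since $\Pr(N_n\in I_n)\to1$, combining the two ranges gives $\E[\TSP(\cP_n)]/\sqrt n\to\beta$.

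\textbf{Main obstacle.} The only nontrivial step is the uniform integrability in Step 1, namely the deterministic $O(\sqrt k)$ bound on the tour length. We will prove it with the standard strip argument.
\begin{itemize}
\item Take $m=\lceil\sqrt k\,\rceil$ strips of height $1/m$ and visit the points of each strip in order of $x$-coordinate, alternating direction between consecutive strips.
\item Horizontal movement within a strip is at most $1$, so it totals at most $m$ over all strips.
\item Vertical movement within a strip is at most $1/m$ per point, so it totals at most $k/m$.
\item The connections between consecutive strips and the closing of the tour add $O(1)+O(m/m)$.
\end{itemize}
Summing gives $\TSP\le 2\sqrt k+O(1)$.

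An alternative route avoids Theorem~\ref{thm1} altogether: apply subadditivity and Poissonization directly, as in \citet{Steele1997}. Since Theorem~\ref{thm1} is given, however, the de-Poissonization-in-reverse argument above suffices.
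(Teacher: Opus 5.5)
Your argument is correct. Note that the paper does not prove this lemma; it only cites \citet{BHH1959} and \citet{Steele1997}, and in those sources the logical order is the reverse of yours. There, one first shows via geometric subadditivity for a Poisson process on growing squares that $\E[\TSP(\cP\cap Q_t)]/t^2$ converges. One then de-Poissonizes to get the fixed-$n$ mean asymptotics, and finally upgrades to almost sure convergence by a concentration or martingale argument. So the Poisson statement is an intermediate step toward Theorem~\ref{thm1}, not a consequence of it.

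You instead take Theorem~\ref{thm1} as a black box and Poissonize. Bounded convergence, via the deterministic $O(\sqrt k)$ tour bound, turns the almost sure limit into $a_k/\sqrt k\to\beta$. Chebyshev plus Cauchy--Schwarz then show that the Poisson mixture $\sum_k \Pr(N_n=k)a_k$ is asymptotic to $\beta\sqrt n$. Since the paper takes Theorem~\ref{thm1} as known, this is a legitimate and much shorter derivation that needs only a worst-case tour bound. What it does not give is an independent proof that the limit exists. It also uses more than it needs: convergence in mean of $\TSP(X_1,\ldots,X_k)/\sqrt k$ would already suffice for your Step~2.

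One small bookkeeping slip in the strip bound. With alternating directions, the edge joining the end of strip $j$ to the start of strip $j+1$ can have horizontal component close to $1$. So these $m-1$ connections are not $O(1)$ in total as you wrote. The fix is to charge each connection's horizontal travel to the unused slack at the walls of its two adjacent strips, equivalently to traverse each strip from wall to wall and shortcut. This keeps the total horizontal travel at most $m$ and gives your bound of $2\sqrt k+O(1)$. Even without the fix you get $\TSP\le 3\sqrt k+O(1)$, which is all Step~1 needs.
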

	In this paper, we use the equivalent formulation of a unit-intensity Poisson process in an expanding square. Let $\cP$ be a unit-intensity Poisson process on the plane and let $Q_t=[0,t]^2$. Under the scaling $x\mapsto x/t$, $\cP\cap Q_t$ becomes a Poisson process of intensity $t^2$ on the unit square, while every Euclidean length is divided by $t$. Therefore, we have
	\begin{align}\label{eq:poisson}
		\lim_{t\to\infty}\frac{\E[\TSP(\cP\cap Q_t)]}{t^2}=\lim_{t\to\infty}\frac{\E[\TSP(\cP_{t^2})]}{t}=\beta.
	\end{align}

	We next relate tour length to the connectivity of a graph defined by a distance threshold. We introduce the following definitions. 
	\begin{definition}
		For a finite point set $X\subset\R^2$ and $r\geq0$, let $G_r(X)$ be the undirected graph that joins two points when their Euclidean distance is at most $r$. That is, $G_r(X)=(X,E)$, where
		\begin{align*}
			E=\left\{\{x_1,x_2\}:x_1,x_2\in X,\ x_1\neq x_2,\ \|x_1-x_2\|_2\leq r\right\}.
		\end{align*}
	\end{definition}
	
	\begin{definition}
		For a finite graph $G$, let $p(G)$ be the minimum number of vertex-disjoint paths (i.e., paths that do not share vertices) whose vertices cover the vertex set of $G$. A single vertex is allowed as a path. 
	\end{definition}
	
	\begin{definition}
		For a finite point set $X\subset\R^2$ with $|X|\geq3$, let $H$ be a Hamiltonian cycle on $X$. For $r\geq0$, let $N_H(r)$ be the number of edges of $H$ longer than $r$.
	\end{definition}
	
	\begin{definition}
		For a graph $G$, let $\cC(G)$ denote its collection of connected components. Each component is a maximal connected subgraph. In particular, $\cC(G)=\varnothing$ for the empty graph. Let $\tau(G)$ denote the number of spanning trees of $G$.
	\end{definition}
	Deleting edges longer than $r$ breaks a tour into paths, each contained in one component of $G_r(X)$. In Figure~\ref{fig:lower-construction}, we provide an illustration for this procedure. Building on the above definitions, we then establish the following result, which connects the length of a Hamiltonian cycle $H$ and $N_H(r)$.
	
	\begin{lemma}\label{lem:pathcover}
		For every finite $X$ with $|X|\geq3$, every Hamiltonian cycle $H$ on $X$, and every $r\geq0$,
		\begin{align}\label{eq:layer-path}
			\ell(H)=\int_0^\infty N_H(r)\d r,\quad N_H(r)\geq\sum_{C\in\cC(G_r(X))}p(C)-1,
		\end{align}
		where $\ell(H)$ denotes the length of $H$.
	\end{lemma}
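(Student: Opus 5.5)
The identity is the layer-cake formula applied edge by edge. For each edge $e$ of $H$ with length $\ell(e)$, write $\ell(e)=\int_0^\infty \mathbbm{1}\{\ell(e)>r\}\,\d r$. Summing over the finitely many edges of $H$ and exchanging the finite sum with the integral gives
\begin{align*}
\ell(H)=\sum_{e\in H}\int_0^\infty \mathbbm{1}\{\ell(e)>r\}\,\d r=\int_0^\infty N_H(r)\,\d r,
\end{align*}
because the integrand $\sum_{e}\mathbbm{1}\{\ell(e)>r\}$ is exactly $N_H(r)$. This part needs nothing further.

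For the inequality, fix $r$ and let $k=N_H(r)$. Suppose first that $k\geq1$. Deleting the $k$ long edges from the cycle $H$ leaves exactly $k$ vertex-disjoint paths covering $X$; isolated vertices count as paths. Every remaining edge has length at most $r$, so it is an edge of $G_r(X)$, and each path is therefore a path in $G_r(X)$. A path is connected, so it lies inside a single component $C\in\cC(G_r(X))$. Grouping the paths by component, the paths lying in $C$ form a vertex-disjoint path cover of $C$. Hence their number is at least $p(C)$, and summing over components gives $k\geq\sum_{C}p(C)$. This is stronger than the claimed bound by one.

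If instead $k=0$, then all of $H$ lies in $G_r(X)$. The graph $G_r(X)$ is then connected with one component $C=G_r(X)$, and $C$ contains a Hamiltonian path (delete any edge of $H$), so $p(C)=1$. The right-hand side is $1-1=0=N_H(r)$, and equality holds.

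The only delicate point is this $k=0$ case. A cycle cut in $k$ places gives $k$ paths when $k\geq1$ but one path, not zero, when $k=0$, and this is why the $-1$ appears in the statement. I would note explicitly that the argument uses $|X|\geq3$ only so that $H$ is a genuine cycle.
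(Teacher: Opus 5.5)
Your proof is correct and follows the paper's proof. Both use the layer-cake identity summed over edges. When $N_H(r)\geq1$, both delete the long edges to get exactly $N_H(r)$ paths, each lying in a single component of $G_r(X)$. When $N_H(r)=0$, both note that $G_r(X)$ is connected with $\sum_C p(C)=1$. Your explicit remark that the paths grouped within each component form a path cover of that component fills in a step the paper leaves implicit.
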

	
	\proof{Proof.}
	Let $e_1,\ldots,e_n$ be the edge lengths of $H$. Since $e_i=\int_0^\infty\mathbbm{1}\{r<e_i\}\d r$, summing over the edges yields
	\begin{align*}
		\ell(H)=\sum_{i=1}^n e_i=\sum_{i=1}^n\int_0^\infty\mathbbm{1}\{r<e_i\}\d r=\int_0^\infty\sum_{i=1}^n\mathbbm{1}\{r<e_i\}\d r=\int_0^\infty N_H(r)\d r,
	\end{align*}
	which proves the first identity.
	
	For the second inequality, if $N_H(r)\geq1$, deleting the $N_H(r)$ long edges leaves exactly $N_H(r)$ vertex-disjoint paths. Note that every retained edge belongs to $G_r(X)$ because its length is no more than $r$, so any path must lie in a component $C$. Therefore, we obtain $N_H(r)\geq\sum_{C\in\cC(G_r(X))}p(C)$. If $N_H(r)=0$, then $G_r(X)$ is connected and contains the cycle $H$, so $\sum_{C\in\cC(G_r(X))}p(C)=1$, which completes the proof.
	\Halmos
	\endproof

\begin{figure}[t]
\centering
\FIGURE
{\scalebox{1.1}{\begin{tikzpicture}[
  x=1.1cm, y=1.1cm,
  font=\footnotesize,
  tour/.style={
    draw=tourblue, line width=1.1pt,
    line cap=round, line join=round
  },
  deleted/.style={
    draw=delorange, line width=0.9pt,
    dash pattern=on 3pt off 2pt
  },
  gedge/.style={draw=black!40, line width=0.5pt},
  pt/.style={
    circle, fill=black, inner sep=0pt, minimum size=3.2pt
  },
  ring/.style={
    circle, draw=tourblue, line width=0.8pt,
    inner sep=0pt, minimum size=8pt
  }
]
  \coordinate (ta) at (1.90,3.36);
  \coordinate (tb) at (1.40,4.06);
  \coordinate (tc) at (2.24,4.24);
  \coordinate (p1) at (3.44,4.56);
  \coordinate (p2) at (4.24,4.08);
  \coordinate (p3) at (5.12,4.44);
  \coordinate (p4) at (6.00,4.10);
  \coordinate (c)  at (5.00,2.20);
  \coordinate (y1) at (5.40,2.8928);
  \coordinate (z1) at (5.00,1.40);
  \coordinate (x1) at (4.20,2.20);
  \coordinate (S)  at (2.40,1.00);
  \draw[gedge] (ta) -- (tc);
  \draw[gedge] (c) -- (x1);
  \draw[deleted] (tc) -- (p1);
  \draw[deleted] (p4) -- (y1);
  \draw[deleted] (z1) -- (S);
  \draw[deleted] (S) -- (x1);
  \draw[deleted] (x1) -- (ta);
  \draw[tour] (ta) -- (tb) -- (tc);
  \draw[tour] (p1) -- (p2) -- (p3) -- (p4);
  \draw[tour] (y1) -- (c) -- (z1);
  \foreach \q in {ta,tb,tc,p1,p2,p3,p4,c,y1,z1,x1,S}
    \node[pt] at (\q) {};
  \node[ring] at (x1) {};
  \node[ring] at (S) {};
  \node[anchor=east]  at (1.52,3.40) {$p=1$};
  \node[anchor=south] at (4.68,4.72) {$p=1$};
  \node[anchor=west]  at (5.18,2.02) {$p=2$};
  \node[anchor=north] at (2.40,0.78) {$p=1$};
  \draw[tour] (0.50,-0.40) -- (0.95,-0.40)
    node[right=2pt, text=black] {kept tour edge};
  \draw[deleted] (4.00,-0.40) -- (4.45,-0.40)
    node[right=2pt, text=black] {deleted tour edge};
  \draw[gedge] (0.50,-0.98) -- (0.95,-0.98)
    node[right=2pt, text=black] {other edge of $G_r(X)$};
  \node[ring] at (4.225,-0.98) {};
  \node[pt] at (4.225,-0.98) {};
  \node[right=2pt] at (4.45,-0.98) {single-vertex path};
\end{tikzpicture}}}
{An illustration of the lower bound construction.
\label{fig:lower-construction}}
{Deleting the five edges longer than $r$ from a Hamiltonian
cycle leaves five paths in four connected components of
$G_r(X)$. Labels indicate the minimum numbers of
vertex-disjoint paths $p(C)$ for the connected components.}
\end{figure}

	Lemma \ref{lem:pathcover} allows us to connect the tour length with $p(C)$ for $C\in\cC(G_r(X))$. However, exactly computing $p(C)$ would be costly. We instead define and use a  lower bound $q$, as follows. 
	
	\begin{definition}
		For a finite graph $G$, write $d_G(v)$ for the degree of $v$ and $n_1(G)$ for the number of vertices of degree one. For $S\subseteq V(G)$, let $G-S$ be the subgraph of $G$ induced by $V(G)\setminus S$, that is, the graph on the vertex set $V(G)\setminus S$ that keeps every edge of $G$ with both endpoints in $V(G)\setminus S$. Let $\operatorname{cc}(G-S)=|\cC(G-S)|$, with $\operatorname{cc}(\varnothing)=0$. Write $(a)_+=\max\{a,0\}$. For connected $G$, define
		\begin{align}\label{eq:Dqg}
			D(G)=\frac12\sum_{v\in V(G)}(2-d_G(v))_+,\quad q(G)=\max\left\{1,\left\lceil\frac{n_1(G)}2\right\rceil,\max_{S\subseteq V(G)}\bigl(\operatorname{cc}(G-S)-|S|\bigr)\right\},
		\end{align}
		and let $g(G)=q(G)-D(G)$. For disconnected $G$, we set $g(G)=0$ instead.
	\end{definition}
	Then we obtain the following property on $p(G)$ and $q(G)$, which ensures that $q$ is a lower bound of $p$.
	
	\begin{lemma}\label{lem:deficiency}
		If $G$ is connected, then $p(G)\geq q(G)\geq D(G)$ and therefore $0\leq g(G)\leq p(G)-D(G)$.
	\end{lemma}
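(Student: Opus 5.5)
The plan is to prove the two inequalities $p(G)\geq q(G)$ and $q(G)\geq D(G)$ separately, for a connected finite graph $G$. The consequence $0\leq g(G)\leq p(G)-D(G)$ then follows at once from $g(G)=q(G)-D(G)$.

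For $p(G)\geq q(G)$ I will show that $p(G)$ dominates each of the three terms in the maximum defining $q(G)$.

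\emph{First term.} Since $G$ has at least one vertex, any path cover uses at least one path, so $p(G)\geq 1$.

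\emph{Second term.} Fix an optimal cover by $p(G)$ vertex-disjoint paths. A vertex $v$ with $d_G(v)=1$ can be an interior vertex of a path only if it has two neighbors, which it does not. So $v$ is an endpoint of its path, or is a single-vertex path. Each path has at most two endpoints, where we count a trivial path as having its single vertex as an endpoint. Hence $n_1(G)\leq 2p(G)$, and since $p(G)$ is an integer, $p(G)\geq\lceil n_1(G)/2\rceil$.

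\emph{Third term.} Take any $S\subseteq V(G)$ and delete $S$ from each path of the cover. A path that loses $k$ of its vertices breaks into at most $k+1$ subpaths. Summed over the cover, this leaves at most $p(G)+|S|$ vertex-disjoint paths covering $V(G)\setminus S$. Each surviving subpath is connected in $G-S$, so it lies inside one component of $G-S$. Every component is nonempty and so contains at least one subpath, which gives $\operatorname{cc}(G-S)\leq p(G)+|S|$. This is the standard toughness-type argument. The case $S=V(G)$ is harmless, since then $\operatorname{cc}(\varnothing)=0$.

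For $q(G)\geq D(G)$, I will show that $D(G)\leq\max\{1,\lceil n_1(G)/2\rceil\}$.

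\emph{Case $|V(G)|\geq2$.} Connectivity forces every vertex to have degree at least one. Then $(2-d_G(v))_+$ equals $1$ exactly when $d_G(v)=1$ and is $0$ otherwise. Therefore $D(G)=n_1(G)/2\leq\lceil n_1(G)/2\rceil$.

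\emph{Case $|V(G)|=1$.} The single vertex has degree $0$, so $D(G)=1\leq q(G)$.

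Combining the two cases with the first part gives $p(G)\geq q(G)\geq D(G)$, as required.

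The only step with real content is the deletion argument for the $\operatorname{cc}(G-S)-|S|$ term. Even there, the points to check are just the bookkeeping that removing $k$ vertices from a path yields at most $k+1$ pieces, and that every component of $G-S$ receives at least one piece. Everything else is direct from the definitions.
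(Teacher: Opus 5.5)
Your proof is correct and follows essentially the same route as the paper: you bound $p(G)$ below by each of the three terms in $q(G)$ (the trivial bound, leaf counting, and the deletion argument giving $\operatorname{cc}(G-S)\leq p(G)+|S|$), then use the same two-case split to get $D(G)=n_1(G)/2$ or $D(G)=1$. The only difference is that the paper also proves $p(G)\geq D(G)$ directly by a per-path count, which is redundant once the chain $p(G)\geq q(G)\geq D(G)$ is established, so omitting it loses nothing.
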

	
	\proof{Proof.}
	For each path $P$ in a path cover, an internal vertex has degree at least two in $G$, and each endpoint of a nonsingleton path contributes at most one to $(2-d_G(v))_+$. A singleton contributes at most two. Therefore, for each path $P$ in the cover, we have $\frac12\sum_{v\in V(P)}(2-d_G(v))_+\leq1$. Summing over a minimum path cover, we obtain $p(G)\geq D(G)$.
	
	Next we prove that $p(G)\geq q(G)$. Note that $p(G)\geq1$. Moreover, each path contains at most two leaves because every internal vertex has degree no less than two. We therefore obtain $p(G)\geq\lceil n_1(G)/2\rceil$. Further, for any $S\subseteq V(G)$, if the vertices of $S$ are deleted from a collection of $p(G)$ paths, each deletion increases the number of nonempty path pieces by at most one. Thus at most $p(G)+|S|$ pieces remain in the graph $G-S$, while every connected component of $G-S$ contains at least one of them. Hence $p(G)+|S|\geq\operatorname{cc}(G-S)$. Taking the maximum over the set $S$ yields $p(G)\geq q(G)$.
	
	Finally, for a connected graph with at least two vertices, there exists no isolated vertex. Only leaves have positive contributions to $D(G)$, so $D(G)=n_1(G)/2\leq\lceil n_1(G)/2\rceil\leq q(G)$. When the graph contains only a single vertex, the result is immediate since $p(G)=q(G)=D(G)=1$.
	\Halmos
	\endproof

	\subsection{The lower bound}\label{subsec:lower-components}

	By Lemmas~\ref{lem:pathcover} and~\ref{lem:deficiency}, at every threshold $r$ a tour has at least $\sum_{C\in\cC(G_r(X))}\bigl(D(C)+g(C)\bigr)-1$ edges longer than $r$. Integrating this inequality over $r$ yields a lower bound for the tour length, which can be translated into a lower bound for $\beta$. It suffices to bound the contributions of $D(C)$ and $g(C)$. We fix $k\geq3$, set $y_1=(0,0)$, and write $\boldsymbol y=(y_1,\ldots,y_k)$, where $y_i\in \R^2$. For $x\in\R^2$ and $s\geq0$, let $B(x,s)=\{u\in\R^2:\|u-x\|_2\leq s\}$ denote the closed disk of radius $s$ centered at $x$, and define
	\begin{align}\label{eq:A}
		A(\boldsymbol{y})=\mu\left(\bigcup_{i=1}^k B(y_i,1)\right),
	\end{align}
	where $\mu(\cdot)$ denotes planar area. Then we establish the following lower bound for $\beta$.

	\begin{theorem}\label{thm:lower-master}
		For $r>0$, define
		\begin{align}\label{eq:rho}
			\rho_k(r)=\frac1{k!}\int_{(\R^2)^{k-1}}\mathbbm{1}\{G_r(\boldsymbol{y})\text{ is connected}\}g(G_r(\boldsymbol{y}))e^{-\mu(\cup_{i=1}^kB(y_i,r))}\d y_2\cdots\d y_k.
		\end{align}
		Then, for every finite integer $K\geq3$, we have
		\begin{align}\label{eq:lower-master}
			\beta\geq\frac58+\sum_{k=3}^KC_k,
		\end{align}
		where
		\begin{align}\label{eq:Ck}
			C_k=\int_0^\infty\rho_k(r)\d r =\frac{\Gamma(k-\tfrac12)}{2k!}\int_{(\R^2)^{k-1}}\frac{g(G_1(\boldsymbol{y}))}{A(\boldsymbol{y})^{k-1/2}}\d y_2\cdots\d y_k.
		\end{align}
		Here $\Gamma(a)=\int_0^\infty u^{a-1}e^{-u}\d u$ for $a>0$.
	\end{theorem}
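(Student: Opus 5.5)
Fix a Hamiltonian cycle $H$ on $X=\cP\cap Q_t$, taken to be an optimal tour. Lemmas~\ref{lem:pathcover} and~\ref{lem:deficiency} give
\begin{align*}
\TSP(X)\geq\int_0^\infty\Bigl(\sum_{C\in\cC(G_r(X))}\bigl(D(C)+g(C)\bigr)-1\Bigr)_+\d r,
\end{align*}
and the positive part may be inserted because $N_H(r)\geq0$. I will drop the positive part only on the bounded range $r\leq R_t$ with $R_t=\operatorname{diam}(Q_t)=\sqrt2\,t$, which costs at most $R_t=o(t^2)$. Since every term $D(C),g(C)$ is nonnegative, I may also discard all components of size greater than $K$ in the $g$-sum. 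Taking expectations, dividing by $t^2$ and applying \eqref{eq:poisson} reduces the claim to showing that the two resulting expectations, normalized by $t^2$, have $\liminf$ at least $5/8$ and $\sum_{k=3}^K C_k$ respectively.

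\emph{The $D$-part.} Write $\sum_C D(C)=\frac12\sum_{v\in X}(2-d_{G_r}(v))_+$, which equals $\sum_v\bigl(\mathbbm 1\{d_{G_r}(v)=0\}+\tfrac12\mathbbm 1\{d_{G_r}(v)=1\}\bigr)$. By Mecke's formula (Slivnyak), for points far from $\partial Q_t$ the degree is $\Poi(\pi r^2)$. Hence the expected count is $t^2(e^{-\pi r^2}+\tfrac12\pi r^2e^{-\pi r^2})$ up to boundary corrections. Integrating over $r$ gives
\begin{align*}
\int_0^\infty e^{-\pi r^2}\d r=\tfrac12,\qquad \tfrac12\int_0^\infty \pi r^2e^{-\pi r^2}\d r=\tfrac18,
\end{align*}
so the total is $5/8$, recovering the BHH constant. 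The boundary effect only affects points within distance $r$ of $\partial Q_t$. I will handle it by restricting to vertices in $[r,t-r]^2$, or simply by dominated convergence after dividing by $t^2$, since the integrand decays like a Gaussian in $r$.

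\emph{The $g$-part.} For each $k\in\{3,\dots,K\}$, consider the sum over $k$-subsets $Y\subseteq X$ of $\mathbbm 1\{Y\text{ is a component of }G_r(X)\}\,g(G_r(Y))$. The set $Y$ is a component exactly when $G_r(Y)$ is connected and no other Poisson point lies in $\cup_{y\in Y}B(y,r)$. By the multivariate Mecke formula, the expectation is
\begin{align*}
\frac1{k!}\int_{Q_t^k}\mathbbm 1\{G_r\text{ connected}\}\,g\,e^{-\mu(\cup B(y_i,r)\cap Q_t)}\,\d y_1\cdots\d y_k.
\end{align*}
Translation invariance in $y_1$, together with the fact that intersecting with $Q_t$ only shrinks the area and so enlarges the exponential, gives at least $(t-2kr)_+^2\rho_k(r)$. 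To see this, restrict $y_1$ so that all disks lie inside $Q_t$; this is possible because connectivity forces $\|y_i-y_1\|\leq (k-1)r$. Dividing by $t^2$ and applying Fatou's lemma yields the contribution $\int_0^\infty\rho_k(r)\d r=C_k$.

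Finally, for the closed form \eqref{eq:Ck}, substitute $y_i=r z_i$. Then $G_r(r\boldsymbol z)=G_1(\boldsymbol z)$, the Jacobian is $r^{2(k-1)}$, and $\mu(\cup B(y_i,r))=r^2A(\boldsymbol z)$. By Tonelli it remains to evaluate
\begin{align*}
\int_0^\infty r^{2k-2}e^{-r^2A}\d r=\tfrac12\Gamma\bigl(k-\tfrac12\bigr)A^{-(k-1/2)},
\end{align*}
which gives the stated formula.

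\emph{Main obstacle.} I expect the main obstacle to be justifying the interchange of the limit with the $r$-integral for the $D$-part near the boundary, together with the $-1$ and positive-part bookkeeping. Both are lower-order in $t^2$, but they must be checked carefully because the bound is needed uniformly. The $g$-part is safe, since Fatou's lemma applies to its nonnegative integrand.
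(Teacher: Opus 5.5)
Your proposal is correct and follows essentially the same route as the paper: the pointwise bound from Lemmas~\ref{lem:pathcover} and~\ref{lem:deficiency}, the Mecke formula for the degree term giving $5/8$, the multivariate Mecke formula for components of sizes $3,\ldots,K$ with a boundary factor that vanishes as $t\to\infty$, and the substitution $y_i=rz_i$ with a Gamma integral for the closed form of $C_k$. The differences are cosmetic. You integrate up to $R_t=\sqrt2\,t$ instead of a fixed $R$, count window components rather than full-plane ones, and restrict the degree term to interior vertices; the paper instead avoids the boundary issue you flag by noting $(2-d_r^t(x))_+\geq(2-d_r^\infty(x))_+$. You should also add a line for the case $|X|\leq 2$, where no Hamiltonian cycle exists but the integrated bound still holds trivially.
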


    In Inequality \eqref{eq:lower-master}, the term $5/8$ refers to the contribution of $D(C)$, and the last term $\sum C_k$ collects the additional contributions of $g(C)$. In particular, $C_k$ refers to the contribution of connected components of size $k$. Then we provide the proof of Theorem \ref{thm:lower-master}.

	\proof{Proof.}
	Fix $R,t>0$. Let $X_t=\cP\cap Q_t$, and set $T_t=\TSP(X_t)$, where the tour length of a set with at most one point is zero and that of a two-point set is twice the distance between the points. We first integrate over the finite interval $[0,R]$. When $|X_t|\geq3$, Lemmas~\ref{lem:pathcover} and~\ref{lem:deficiency} imply
	\begin{align}\label{eq:lower-prelimit}
		T_t\geq\int_0^R\left(\sum_{C\in\cC(G_r(X_t))}p(C)-1\right)\d r \geq \int_0^R\left(\sum_{C\in\cC(G_r(X_t))}(D(C)+g(C))-1\right)\d r.
	\end{align}
	When $|X_t|\leq2$, the right-hand side of \eqref{eq:lower-prelimit} is nonpositive if $X_t$ has at most one point and is at most the distance between the two points otherwise, so \eqref{eq:lower-prelimit} holds for every realization of $X_t$. 
    
    We bound the integral of $D(C)$ and $g(C)$ separately. We consider the first term. For $x\in X_t$, define $d_r^\infty(x)=\#((\cP\setminus\{x\})\cap B(x,r))$ and $d_r^t(x)=\#((X_t\setminus\{x\})\cap B(x,r))$. Then $d_r^\infty(x)\geq d_r^t(x)$, and hence
	\begin{align}\label{eq:degree-window}
		\sum_{C\in\cC(G_r(X_t))}D(C) = \frac12\sum_{x\in X_t}(2-d_r^t(x))_+ \geq\frac12\sum_{x\in X_t}(2-d_r^\infty(x))_+.
	\end{align}
	Applying the Mecke equation \citep[Theorem~4.1]{LastPenrose2018} to the right-hand side yields
	\begin{align}\label{eq:degree-Mecke}
		\E\left[\sum_{C\in\cC(G_r(X_t))}D(C)\right]\geq\frac12\int_{Q_t}\E\left[(2-Z_r)_+\right]\d x:=t^2d(r),
	\end{align}
	where $Z_r\sim\Poi(\pi r^2)$, and $d(r):=\frac12\E(2-Z_r)_+$.

	For the second term, we count connected components of the full-plane graph that lie entirely in $Q_t$. These components are also components of $G_r(X_t)$. Fix $k\in\{3,\ldots,K\}$, define $a=(k-1)r$, and let $Q_t^{\ominus a}=\{x\in Q_t:\operatorname{dist}(x,\partial Q_t)\geq a\}$. For an ordered $k$-tuple of distinct Poisson points $\boldsymbol x=(x_1,\ldots,x_k)$, define
    \resizebox{\linewidth}{!}{$\displaystyle
        F_{k,t,r}(\boldsymbol x,\cP)=\mathbbm{1}\{x_1\in Q_t^{\ominus a}\}\mathbbm{1}\{G_r(\boldsymbol x)\text{ is connected}\}g(G_r(\boldsymbol x))\times\mathbbm{1}\left\{(\cP\setminus\{x_1,\ldots,x_k\})\cap\bigcup_{i=1}^kB(x_i,r)=\varnothing\right\}.
    $}
	The last indicator implies that the selected points $\boldsymbol{x}$ form a connected component of $G_r(\cP)$. Now we attempt to connect $g(C)$ and $F_{k,t,r}(\boldsymbol x,\cP)$.

	Note that every vertex can be reached from $x_1$ by at most $k-1$ edges, and each edge has length at most $r$. We conclude that its Euclidean distance from $x_1$ is at most $(k-1)r$. Thus $x_1\in Q_t^{\ominus a}$ ensures that all selected points lie in $Q_t$. Recall that each unordered connected component of size $k$ has $k!$ orderings. For an isolated $k$-point connected component $C$, define $m_t(C):=\#\bigl(C\cap Q_t^{\ominus a}\bigr)$ (then $m_t(C)\leq k$). Each of these $m_t(C)$ points can appear as the first entry of the ordered tuple, and the remaining $k-1$ points can then be ordered in $(k-1)!$ ways. Therefore, for every realization of the Poisson process $\cP$, we obtain
	\begin{align}\label{eq:component-ordering}
		\frac1{k!}\sum_{\boldsymbol{x}\in\cP_{\neq}^k} F_{k,t,r}(\boldsymbol{x},\cP) &= \sum_{\substack{C\text{ isolated in }G_r(\cP)\\ C\subseteq Q_t, |C|=k}} \frac{m_t(C)}{k}g(C) \leq \sum_{\substack{C\text{ isolated in }G_r(\cP)\\ C\subseteq Q_t, |C|=k}} g(C),
	\end{align}
	where $\cP_{\neq}^k = \{(x_1,\ldots,x_k)\in \cP^k: x_i\neq x_j \text{ for all } i\neq j\}$ is the set of ordered $k$-tuples of distinct Poisson points.

	For fixed $x_1,\ldots,x_k$, the probability that the Poisson process contains no additional point in $\bigcup_{i=1}^k B(x_i,r)$ is $\exp(-\mu(\bigcup_{i=1}^k B(x_i,r)))$. The multivariate Mecke equation \citep[Theorem~4.4]{LastPenrose2018} therefore establishes
	\begin{align}\label{eq:Mecke-integral}
		&\frac{1}{k!}\E\left[\sum_{(x_1,\ldots,x_k)\in\cP_{\neq}^{k}}F_{k,t,r}\bigl((x_1,\ldots,x_k),\cP\bigr)\right] \nonumber\\ =&\frac1{k!}\int_{Q_t^{\ominus a}}\int_{(\R^2)^{k-1}}\mathbbm 1\{G_r(\boldsymbol{x})\text{ is connected}\}g\bigl(G_r(\boldsymbol{x})\bigr)e^{-\mu\left(\bigcup_{i=1}^kB(x_i,r)\right)}\d x_2\cdots\d x_k \d x_1
	\end{align}
	Now fix $x_1$ and consider the inner integral over $x_2,\ldots,x_k$. Its value is the same for every $x_1$, because shifting all $k$ points by the same vector does not change their distances or the area of the disk union. After translating $x_1$ to the origin, we find that the inner integral is exactly $k!\rho_k(r)$ by Equation \eqref{eq:rho}. Since $\mu(Q_t^{\ominus a})=(t-2(k-1)r)_+^2$, we can use \eqref{eq:component-ordering} and establish that
	\begin{align}\label{eq:component-Mecke}
		\E\Bigg[\sum_{\substack{C\in\cC(G_r(\cP))\\ C\subseteq Q_t, |C|=k}}g(C) \Bigg] \geq  \frac{1}{k!}\E\left[\sum_{(x_1,\ldots,x_k)\in\cP_{\neq}^{k}}F_{k,t,r}\bigl(\boldsymbol{x},\cP\bigr)\right] = \bigl(t-2(k-1)r\bigr)_+^2\rho_k(r).
	\end{align}

	We now combine the integrals of $g$ and $D$.  Lemma~\ref{lem:deficiency} gives $p(C)\geq D(C)+g(C)$ for every component of $G_r(X_t)$. Because $g\geq0$, we may retain only full-plane components of sizes $3,\ldots,K$ in the correction term. Taking expectations in \eqref{eq:lower-prelimit} and applying \eqref{eq:degree-Mecke} and \eqref{eq:component-Mecke} yields
	\begin{align}\label{eq:lower-window}
		\frac{\E [T_t]}{t^2} &\geq\frac1{t^2}\E\left[\int_0^R\sum_{C\in\cC(G_r(X_t))}\bigl(D(C)+g(C)\bigr)\d r\right]-\frac{R}{t^2}\nonumber\\
		&\geq\int_0^Rd(r)\d r+\sum_{k=3}^K\int_0^R\left(1-\frac{2(k-1)r}{t}\right)_+^2\rho_k(r)\d r-\frac{R}{t^2}.
	\end{align}
	For fixed $R$ and $K$, the boundary factors converge uniformly to one on $[0,R]$ as $t\to\infty$. Each $\rho_k$ is bounded on this interval: connectivity of $G_r(\boldsymbol y)$ requires $\|y_i\|_2\leq(k-1)R$, while $0\leq g(G)\leq p(G)\leq k$. Thus dominated convergence and \eqref{eq:poisson} give
	\begin{align*}
		\beta = \lim_{t\rightarrow\infty}\frac{\E T_t}{t^2}&\geq \lim_{t\rightarrow\infty} \left(\int_0^Rd(r)\d r+\sum_{k=3}^K\int_0^R\left(1-\frac{2(k-1)r}{t}\right)_+^2\rho_k(r)\d r-\frac{R}{t^2}\right) \\&\geq  \int_0^Rd(r)\d r+\sum_{k=3}^K\int_0^R\rho_k(r)\d r.
	\end{align*}
	Letting $R\to\infty$ and using the monotone convergence theorem yields
	\[\beta\geq\int_0^\infty d(r)\d r+\sum_{k=3}^K\int_0^\infty\rho_k(r)\d r = \frac58 +\sum_{k=3}^K\int_0^\infty\rho_k(r)\d r, \]
	which proves \eqref{eq:lower-master}. The equation holds because
	\begin{align*}
		\int_0^\infty d(r)\d r &= \int_0^{\infty}\left(\Pr(Z_r=0)+\frac12\Pr(Z_r=1) \right) \d r\\
        &=\int_0^\infty e^{-\pi r^2}\left(1+\frac{\pi r^2}{2}\right)\d r=\frac12+\frac12\int_0^\infty e^{-\pi r^2}\pi r^2\d r=\frac58.
	\end{align*}
	
	It remains to derive the second equation in \eqref{eq:Ck}. By definition, we see that
	\begin{align*}
		C_k=\frac1{k!}\int_0^\infty\int_{(\R^2)^{k-1}}\mathbbm 1\{G_r(\boldsymbol{z})\text{ is connected}\}g\bigl(G_r(\boldsymbol{z})\bigr)\exp\left(-\mu\left(\bigcup_{i=1}^kB(z_i,r)\right)\right)\d z_2\cdots\d z_k\d r,
	\end{align*}
	where $\boldsymbol{z}= \big((0,0),z_2,\ldots,z_k\big)$. For $r>0$, set $z_i=ry_i$. Then $\{z_i,z_j\}$ is an edge of $G_r(\boldsymbol z)$ if and only if $\|y_i-y_j\|_2\leq1$. The disk-union area becomes $r^2A(\boldsymbol y)$, and the Jacobian for the $k-1$ free planar coordinates is $r^{2k-2}$. Hence
	\begin{align}\label{eq:Ck-after-scaling}
		C_k=\frac1{k!}\int_0^\infty\int_{(\R^2)^{k-1}}\mathbbm 1\{G_1(\boldsymbol{y})\text{ is connected}\}g\bigl(G_1(\boldsymbol{y})\bigr)r^{2k-2}e^{-r^2A(\boldsymbol{y})}\d y_2\cdots\d y_k\d r.
	\end{align}
	The integrand is nonnegative, so Tonelli's theorem permits exchanging the integrals. Evaluating the inner integral with $u=r^2A(\boldsymbol y)$ yields
	\begin{align*}
		C_k&=\frac1{k!}\int_{(\R^2)^{k-1}}\mathbbm 1\{G_1(\boldsymbol{y})\text{ is connected}\}g\bigl(G_1(\boldsymbol{y})\bigr)\left(\int_0^\infty r^{2k-2}e^{-r^2A(\boldsymbol{y})}\d r\right)\d y_2\cdots\d y_k\\
		&=\frac1{k!}\int_{(\R^2)^{k-1}}\mathbbm 1\{G_1(\boldsymbol{y})\text{ is connected}\}g\bigl(G_1(\boldsymbol{y})\bigr)\left(\frac12\Gamma\left(k-\frac12\right)A(\boldsymbol{y})^{-(k-1/2)}\right)\d y_2\cdots\d y_k\\
		&=\frac1{k!}\int_{(\R^2)^{k-1}}g\bigl(G_1(\boldsymbol{y})\bigr)\left(\frac12\Gamma\left(k-\frac12\right)A(\boldsymbol{y})^{-(k-1/2)}\right)\d y_2\cdots\d y_k,
	\end{align*}
	which completes the proof. The last equation holds because $g(G)=0$ for any disconnected graph $G$.
	\Halmos
	\endproof
	
	\subsection{Deterministic lower bound}\label{subsec:lower-det}
	Theorem \ref{thm:lower-master} establishes a lower bound of $\beta$. Direct evaluation is infeasible because the dimension of $C_k$ grows with $k$. It therefore suffices to derive a lower bound of $C_k$. In particular, for $k=3,4,5$, we use the computer-assisted method by partitioning the integration region into disjoint rectangles and bounding the integral from below over each rectangle.

	\subsubsection{Computer-assisted lower bounds for $C_k$.}
	We begin with reducing the integration region. The integrand in \eqref{eq:Ck} is invariant under rotations and reflections of all points. Write $y_2=(r_2\cos\theta,r_2\sin\theta)$, where $r_2\geq0$ and $\theta\in[0,2\pi)$. For each $\theta$, rotate all points by $-\theta$ and write the resulting coordinates as $\boldsymbol y=\bigl((0,0),(r_2,0),(\xi_3,\eta_3),\ldots,(\xi_k,\eta_k)\bigr)$. Then we obtain
	\begin{align*}
		C_k &=\frac{\Gamma(k-\tfrac12)}{2k!}\int_0^\infty\int_0^{2\pi}\int_{\R^{2k-4}}r_2g(G_1(\boldsymbol y))A(\boldsymbol y)^{-(k-1/2)}\d \xi_3\d \eta_3\cdots\d \xi_k\d \eta_k\d\theta\d r_2 \\
		&=\frac{\pi\Gamma(k-\tfrac12)}{k!}\int_0^\infty\int_{\R^{2k-4}}r_2g(G_1(\boldsymbol y))A(\boldsymbol y)^{-(k-1/2)}\d \xi_3\d \eta_3\cdots\d \xi_k\d \eta_k\d r_2.
	\end{align*}
	In addition, reflecting all points across the horizontal axis changes every $\eta_j$ to $-\eta_j$ and preserves the integrand except on the measure-zero set $\eta_3=0$. Therefore, we may restrict the integration region to $\eta_3\geq0$ and yield:
	\begin{align}\label{eq:Ck-reduced}
		C_k=\frac{2\pi\Gamma(k-\tfrac12)}{k!}\int_{[0,\infty)\times\R\times[0,\infty)\times\R^{2k-6}}r_2g(G_1(\boldsymbol y))A(\boldsymbol y)^{-(k-1/2)}\d r_2\d\xi_3\d\eta_3\cdots\d\xi_k\d\eta_k.
	\end{align}
    Because the integrand in \eqref{eq:Ck-reduced} is nonnegative, we may restrict the integration to a region $\Omega_k \subseteq[0,\infty)\times\R\times[0,\infty)\times\R^{2k-6}$ (specified below) to obtain a lower bound on $C_k$. For computational convenience, we consider the contribution of $C_k$ for $k=3,4,5$. For each $k$, we partition $\Omega_k$ into cubes of side length $h$ with disjoint interiors (called subregions) and bound the integral over each subregion from below.

	Fix a subregion $\cQ$ whose $r_2$ interval is $[nh,(n+1)h]$, where $n$ is a non-negative integer. To determine which graph edges can occur in $\cQ$, we define the following for each $i<j$:
	\begin{align*}
		d_{ij}^{-}(\mathcal Q)&=\min_{\boldsymbol y\in\mathcal Q}\|y_i-y_j\|_2 = \min_{\boldsymbol y\in\mathcal Q}\|(\xi_i,\eta_i)-(\xi_j,\eta_j)\|_2,\\
        d_{ij}^{+}(\mathcal Q)&=\max_{\boldsymbol y\in\mathcal Q}\|y_i-y_j\|_2=\max_{\boldsymbol y\in\mathcal Q}\|(\xi_i,\eta_i)-(\xi_j,\eta_j)\|_2.
	\end{align*}
	These values can be computed exactly from the coordinate intervals. The edge $\{i,j\}$ is present throughout $\cQ$ (i.e., for any $(r_2,\xi_3,\eta_3,\ldots,\xi_k,\eta_k)\in \cQ$) if $d_{ij}^{+}(\cQ)\leq1$ (we call it \emph{always-present}), absent throughout $\cQ$ if $d_{ij}^{-}(\cQ)>1$ (we call it \emph{always-absent}), and otherwise undetermined. Let $\mathcal G(\mathcal Q)$ be the set of graphs that contain every always-present edge, contain no always-absent edge, and may contain any subset of the undetermined edges. Then $G_1(\boldsymbol y)\in\mathcal G(\mathcal Q)$ holds for every $\boldsymbol y\in\mathcal Q$. Define
	\begin{align}\label{eq:g-box}
		\underline g(\mathcal Q)=\min_{G\in\mathcal G(\mathcal Q)}g(G).
	\end{align}

    We have $g(G_1(\boldsymbol y))\geq\underline g(\cQ)$ for every $\boldsymbol y\in\cQ$; in particular, $\underline g(\cQ)=0$ whenever $\mathcal G(\mathcal Q)$ contains a disconnected graph.

	It remains to bound $A(\boldsymbol y)$ from above. We obtain such a bound by estimating the area contribution of adding disks along a spanning tree. First, $c(d)$ is the area of one unit disk that lies outside another unit disk when their centers are a distance $d$ apart. We obtain the following property of $c(d)$.

	\begin{lemma}\label{lem:disk-area}
		For $0\leq d\leq1$,
		\begin{align}\label{eq:crescent}
			c(d)=2\arcsin\left(\frac d2\right)+\frac d2\sqrt{4-d^2} =\int_0^d\sqrt{4-u^2}\d u \leq P(d):=2d-\frac{d^3}{12}-\frac{d^5}{320}.
		\end{align}
		Moreover, both $c(d)$ and $P(d)$ are increasing on $[0,1]$.
	\end{lemma}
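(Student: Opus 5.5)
Three things need checking: the closed form of $c(d)$ as the area of a unit disk outside another, the integral representation, and the polynomial upper bound $P(d)$ together with monotonicity.

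\emph{Geometric formula.} Place the centers at $(0,0)$ and $(d,0)$. The lens of overlap has area $2\arccos(d/2)-\frac d2\sqrt{4-d^2}$, the standard formula for two unit circles at distance $d$. Subtracting this from $\pi$ and using $\pi-2\arccos(d/2)=2\arcsin(d/2)$ gives
\[
c(d)=2\arcsin\left(\tfrac d2\right)+\tfrac d2\sqrt{4-d^2}.
\]
Alternatively, one can compute the area of $B((d,0),1)\setminus B(0,1)$ directly as the region swept by horizontal translation. In that view $c'(d)$ equals the length of the chord of the intersection perpendicular to the line of centers, which is $\sqrt{4-d^2}$. To confirm the integral form, note that $c(0)=0$ and differentiate the closed form:
\[
\frac{1}{\sqrt{1-d^2/4}}+\tfrac12\sqrt{4-d^2}-\frac{d^2}{2\sqrt{4-d^2}}.
\]
Putting everything over $\sqrt{4-d^2}$ gives $\bigl(4+(4-d^2)-d^2\bigr)/(2\sqrt{4-d^2})=\sqrt{4-d^2}$. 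The fundamental theorem of calculus then gives $c(d)=\int_0^d\sqrt{4-u^2}\,\d u$.

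\emph{Polynomial bound.} Since $P(0)=0=c(0)$, it suffices to show $c'(u)\le P'(u)$ on $[0,1]$, that is,
\[
\sqrt{4-u^2}\le 2-\frac{u^2}4-\frac{u^4}{64}.
\]
Write $\sqrt{4-u^2}=2\sqrt{1-x}$ with $x=u^2/4$. The right side equals $2(1-x/2-x^2/8)$, which is exactly the second-order Taylor polynomial of $2\sqrt{1-x}$. All higher Taylor coefficients of $\sqrt{1-x}$ are negative, so $\sqrt{1-x}\le 1-x/2-x^2/8$ for $x\in[0,1)$. As a direct check that avoids series, square both sides: the right side is positive on $[0,1]$, and $(1-x/2-x^2/8)^2-(1-x)=x^3/8+x^4/64\ge0$. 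Integrating from $0$ to $d$ yields $c(d)\le P(d)$.

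\emph{Monotonicity.} We have $c'(d)=\sqrt{4-d^2}>0$ on $[0,1]$. Also $P'(d)=2-d^2/4-d^4/64\ge 2-1/4-1/64>0$ there. Hence both functions are increasing on $[0,1]$. No step is a real obstacle. The only care needed is the sign check in the squared inequality, which reduces to the identity $(1-x/2-x^2/8)^2=1-x+x^3/8+x^4/64$; this should be verified by expansion.
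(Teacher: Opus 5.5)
Your proof is correct and follows essentially the same route as the paper. Both arguments take the lens-area formula for the closed form, differentiate to get $c'(u)=\sqrt{4-u^2}$, and then compare derivatives by squaring. Your identity $(1-x/2-x^2/8)^2-(1-x)=x^3/8+x^4/64$ with $x=u^2/4$ is exactly the paper's $P'(u)^2-(4-u^2)=u^6/128+u^8/4096$ after rescaling.
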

	
	\proof{Proof.}
    The overlap of two unit disks at distance $d$ has area $2\arccos(d/2)-(d/2)\sqrt{4-d^2}$; subtracting it from $\pi$ and using $\pi-2\arccos x=2\arcsin x$ gives the first identity in \eqref{eq:crescent}. Moreover,  differentiate $c(d)$ yields $c'(d)=\sqrt{4-d^2}>0$, which together with $c(0)=0$ gives the second identity and shows that $c$ is increasing. For $0\leq u\leq1$, we have $P'(u)=2-u^2/4-u^4/64>0$ and
    \[
    P'(u)^2-(4-u^2)=\frac{u^6}{128}+\frac{u^8}{4096}\geq0,
    \]
    so $P'(u)\geq\sqrt{4-u^2}=c'(u)>0$. Hence $P$ is increasing, and integrating over $[0,d]$ with $P(0)=c(0)=0$ gives $c(d)\leq P(d)$.
    \Halmos
    \endproof

	We are ready to bound $A(\boldsymbol{y})$. For a fixed subregion $\mathcal Q$, consider the graph on the points $y_1,\ldots,y_k$ that contains the edge $\{y_i,y_j\}$ whenever $d_{ij}^{+}(\mathcal Q)\leq1$. Then every edge of this graph is present in $G_1(\boldsymbol y)$ for every $\boldsymbol y\in\mathcal Q$. If these edges connect all points $y_1,\ldots,y_k$, they contain at least one spanning tree. Let $\mathcal T(\mathcal Q)$ denote the collection of all such spanning trees.

	Fix any $T\in\mathcal T(\mathcal Q)$ and root it at $y_1$. For each $y_j\neq y_1$, its parent is the unique neighbor of $y_j$ on the path from $y_j$ to $y_1$ in $T$. Its other neighbors in $T$, if any, are its children. We add the unit disks $B(y_j,1)$ in an order such that the disk centered at each parent is added before those centered at its children. The first disk has area $\pi$. If $y_i$ is the parent of $y_j$ in $T$, then adding $B(y_j,1)$ increases the union area by at most $c(\|y_i-y_j\|_2)$. Since $\|y_i-y_j\|_2\leq d_{ij}^{+}(\mathcal Q)\leq1$ and $c(\cdot)$ is increasing, applying Lemma~\ref{lem:disk-area} leads to the following inequality.
	\begin{align*} 
		A(\boldsymbol y) &\leq \pi+\sum_{\{y_i,y_j\}\in T}c\bigl(\|y_i-y_j\|_2\bigr) \leq \pi+\sum_{\{y_i,y_j\}\in T}c\bigl(d_{ij}^{+}(\mathcal Q)\bigr) \leq \pi+\sum_{\{y_i,y_j\}\in T}P\bigl(d_{ij}^{+}(\mathcal Q)\bigr), \quad \boldsymbol y\in\mathcal Q.
	\end{align*}
	Taking the minimum over all $T\in\mathcal T(\mathcal Q)$, we define
	\begin{align}\label{eq:A-box}
		\underline A(\mathcal Q) := \pi+\min_{T\in\mathcal T(\mathcal Q)} \sum_{\{y_i,y_j\}\in T}P\bigl(d_{ij}^{+}(\mathcal Q)\bigr).
	\end{align}
	Equipped with Equations \eqref{eq:A-box} and \eqref{eq:g-box}, we obtain the following result.
	
	\begin{lemma}\label{lem:box-lower}
		If the always-present edges on $\mathcal Q$ connect all points $y_1,\ldots,y_k$, then we have
		\begin{align}\label{eq:box-lower}
			\int_{\cQ}r_2g(G_1(\boldsymbol y))A(\boldsymbol y)^{-(k-1/2)}\d r_2\d \xi_3\d \eta_3\cdots\d \xi_k\d \eta_k
			\geq \frac{2n+1}{2}h^{2k-2}\underline g(\mathcal Q)\underline A(\mathcal Q)^{-(k-1/2)}.
		\end{align}
		
	\end{lemma}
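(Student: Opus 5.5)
The plan is to bound the three factors of the integrand pointwise on $\cQ$ from below, and then integrate the one remaining factor, $r_2$, explicitly. The subregion $\cQ$ is a cube of side $h$ in the $2k-3$ coordinates $(r_2,\xi_3,\eta_3,\ldots,\xi_k,\eta_k)$, with $r_2\in[nh,(n+1)h]$. Its volume is therefore $h^{2k-3}$.

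First, the graph factor. For every $\boldsymbol y\in\cQ$ the graph $G_1(\boldsymbol y)$ contains all always-present edges and no always-absent edges, so $G_1(\boldsymbol y)\in\mathcal G(\cQ)$. By the definition \eqref{eq:g-box} this gives $g(G_1(\boldsymbol y))\geq\underline g(\cQ)$. Lemma~\ref{lem:deficiency} shows that $g$ is nonnegative on connected graphs, and $g$ is zero on disconnected ones by definition, so $\underline g(\cQ)\geq 0$.

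Second, the area factor. The hypothesis says the always-present edges connect $y_1,\ldots,y_k$, so $\mathcal T(\cQ)$ is nonempty and $\underline A(\cQ)$ is well defined and finite. The chain of inequalities displayed just before \eqref{eq:A-box} holds for every $T\in\mathcal T(\cQ)$ and every $\boldsymbol y\in\cQ$. It rests on adding disks along $T$ in parent-before-child order: adding the disk centered at a child increases the area by at most the crescent $c$ of its parent edge. It then uses the monotonicity of $c$ and the bound $c\leq P$ from Lemma~\ref{lem:disk-area}. Minimizing over $T$ gives $A(\boldsymbol y)\leq\underline A(\cQ)$. Since $A(\boldsymbol y)\geq\pi>0$ and the map $a\mapsto a^{-(k-1/2)}$ is decreasing on $(0,\infty)$, we get $A(\boldsymbol y)^{-(k-1/2)}\geq\underline A(\cQ)^{-(k-1/2)}$. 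All factors are nonnegative, so the two bounds can be multiplied.

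Third, the integration. The integrand is now bounded below by $r_2\,\underline g(\cQ)\,\underline A(\cQ)^{-(k-1/2)}$, and only the $r_2$ factor varies. The cube is a product of intervals, so Fubini applies. Integrating $r_2$ over $[nh,(n+1)h]$ gives
\[
\tfrac12 h^2\bigl((n+1)^2-n^2\bigr)=\tfrac{2n+1}{2}h^2 .
\]
The remaining $2k-4$ coordinates each range over an interval of length $h$ and contribute $h^{2k-4}$. The product is $\frac{2n+1}{2}h^{2k-2}\,\underline g(\cQ)\,\underline A(\cQ)^{-(k-1/2)}$, which is \eqref{eq:box-lower}.

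There is no real obstacle here. The only point needing care is the pointwise area bound $A\leq\underline A(\cQ)$, which must hold uniformly over $\cQ$. It does, because $d^+_{ij}(\cQ)\leq1$ keeps each tree edge inside the range $[0,1]$ where Lemma~\ref{lem:disk-area} applies and where $c$ and $P$ are monotone.
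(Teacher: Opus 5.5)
Your proposal is correct and follows the paper's proof essentially step for step. Both use the pointwise bounds $g(G_1(\boldsymbol y))\geq\underline g(\mathcal Q)$ and $A(\boldsymbol y)\leq\underline A(\mathcal Q)$ on $\mathcal Q$, then integrate $r_2$ over $[nh,(n+1)h]$ and pick up $h^{2k-4}$ from the remaining coordinates. Your extra remarks that $\underline g(\mathcal Q)\geq 0$ and $A(\boldsymbol y)\geq\pi>0$ make explicit the sign conditions the paper uses implicitly when it multiplies the two bounds.
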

	
	\proof{Proof.}
	For every $\boldsymbol y\in\mathcal Q$, the definition of $\underline g(\mathcal Q)$ and $\underline A(\mathcal Q)$ give $g(G_1(\boldsymbol y))\geq\underline g(\mathcal Q)$ and $A(\boldsymbol y)\leq \underline A(\mathcal Q).$ Therefore, we have $g(G_1(\boldsymbol y))A(\boldsymbol y)^{-(k-1/2)} \geq \underline g(\mathcal Q) \underline A(\mathcal Q)^{-(k-1/2)}.$ Applying this inequality yields the desired results:
	\begin{align*}
		\int_{\mathcal Q} r_2g(G_1(\boldsymbol y))A(\boldsymbol y)^{-(k-1/2)} \d r_2\d\xi_3\d\eta_3\cdots\d\xi_k\d\eta_k& \geq \underline g(\mathcal Q) \underline A(\mathcal Q)^{-(k-1/2)} \int_{\mathcal Q} r_2 \d r_2\d\xi_3\d\eta_3\cdots\d\xi_k\d\eta_k\\
		&= \underline g(\mathcal Q)\underline A(\mathcal Q)^{-(k-1/2)} \left(\int_{nh}^{(n+1)h}r_2\d r_2\right)h^{2k-4}\\
		&= \frac{2n+1}{2}h^{2k-2} \underline g(\mathcal Q) \underline A(\mathcal Q)^{-(k-1/2)},
	\end{align*}
	which completes the proof. \Halmos
	\endproof
	
	Multiplying \eqref{eq:box-lower} by $2\pi\Gamma(k-\tfrac12)/k!$ and summing over each subregion yields a lower bound for $C_k$. We only account for subregions whose always-present edges connect all vertices. We next specify the computational setting we used.

    For $k=3$, a connected graph is either a path or a triangle. Note that a path has $D(G)=q(G)=1$ and hence $g(G)=0$. Thus, the integrand is positive only when the three points form a triangle (which has $g(G)=1$). We consider the integration region $\Omega_3=[0,1]\times[-1,1]\times[0,1]$ in coordinates $(r_2,\xi_3,\eta_3)$. We use subregions of side length $1/1600$ and the preceding lower-bound rule. For $k=4$, we restrict the integration to the domain
    \[\Omega_4=\Big\{(r_2,\xi_3,\eta_3,\xi_4,\eta_4): r_2,\eta_3\in[0,2],\ \xi_3,\xi_4,\eta_4\in[-2,2]\Big\}, \]
    which we partition into subregions of side length $1/50$. For $k=5$, we restrict the integration to the domain 
    \[\Omega_5=\Big\{(r_2,\xi_3,\eta_3,\xi_4,\eta_4,\xi_5,\eta_5): r_2,\eta_3\in[0,3],\ \xi_3,\xi_4,\eta_4,\xi_5,\eta_5\in[-3,3]\Big\}.\]
    We partition this region into subregions of side length $1/10$ and, as a preliminary filter, retain only subregions whose always-present edges connect $y_1,\ldots,y_4$.

    We claim that $g(G_1(\boldsymbol y))>0$ implies $\|y_j\|_2\leq k-2$ for every $j=2,\ldots,k$. The graph is then connected. If a vertex had graph distance $k-1$ from $y_1$, a shortest path to it would contain all $k$ vertices, and any additional edge would shorten this path. The graph would therefore itself be a path, which implies $g(G_1(\boldsymbol y))=0$, a contradiction. Thus every vertex can be reached from $y_1$ by at most $k-2$ edges, each of length at most one. This yields $\|y_j\|_2\leq k-2$, which proves the claim. Since the integrand is nonnegative and, by the claim, vanishes unless $\|y_j\|_2\leq k-2$ for all $j$, we have
    \begin{align}\label{eq:Ck-box}
		C_k&=\frac{2\pi\Gamma(k-\tfrac12)}{k!}\int_{[0,\infty)\times\R\times[0,\infty)\times\R^{2k-6}}r_2g(G_1(\boldsymbol y))A(\boldsymbol y)^{-(k-1/2)}\d r_2\d\xi_3\d\eta_3\cdots\d\xi_k\d\eta_k\nonumber\\
        &\geq \frac{2\pi\Gamma(k-\tfrac12)}{k!}\int_{\Omega_k}r_2g(G_1(\boldsymbol y))A(\boldsymbol y)^{-(k-1/2)}\d r_2\d\xi_3\d\eta_3\cdots\d\xi_k\d\eta_k\nonumber\\
        &=\frac{2\pi\Gamma(k-\tfrac12)}{k!}\int_{\Omega_k\cap \{\boldsymbol y:\ \|y_j\|_2\leq k-2,\ j=2,\ldots,k\}}r_2g(G_1(\boldsymbol y))A(\boldsymbol y)^{-(k-1/2)}\d r_2\d\xi_3\d\eta_3\cdots\d\xi_k\d\eta_k.
	\end{align}

    Finally, we apply Lemma~\ref{lem:box-lower} for computing $C_3,C_4,C_5$. In all these computations, comparisons of distances with the threshold $1$ are carried out with rational squared distances, and the irrational quantities ($\Gamma(k-\tfrac12)$, $\pi$, and the square roots) are enclosed by rational numbers rounded in the direction that decreases the bound, so that every reported value is a valid lower bound. Substituting these results (their values are reported in Table~\ref{tab:numerics}) into Theorem~\ref{thm:lower-master} with $K=5$ yields \[\beta\geq 0.625+0.01113528859 + 0.005040573276  + 0.001015487669  > 0.64219>  0.6421 .\]

	\subsection{High-probability lower bound}\label{subsec:lower-hp}
	
	In Section~\ref{subsec:lower-det}, we divided the integration region into subregions and bounded the integrand from below. In this section we estimate $C_k$ by importance sampling. For each $k$, we construct a random variable $W_k$ that satisfies $\E[W_k]=C_k$, where the expectation is taken over the random points $Y_2,\ldots,Y_k$ (we use the uppercase letters to denote a random variable and the lowercase letters to denote their realization). Next, we take $n_k$ independent copies of $W_k$ and apply a concentration inequality to them. This yields a lower confidence bound for $C_k$, allowing us to obtain a lower bound on $\beta$ that holds with high probability.

	In particular, we generate $\boldsymbol{Y}=(Y_1,\ldots,Y_k)$ using a spanning tree. Cayley's formula implies that there are $k^{k-2}$ trees on $k$ labeled vertices. We first choose one of these trees uniformly at random, root it at vertex $1$, place $Y_1=(0,0)$, and direct every edge away from the root. Then we realize the locations of $Y_i$ according to this sampled tree. For each directed edge, we independently sample a displacement vector uniformly from the unit disk and place each child node by adding the displacement vector to the location of its parent node. By this construction, such a sampled tree belongs to $G_1(\boldsymbol{Y})$, so $G_1(\boldsymbol{Y})$ is connected.

	We next calculate the density function of $\boldsymbol{Y}$ generated through the above process. Fix a realization $\boldsymbol y$ with $y_1=(0,0)$. Then a tree $T$ can generate $\boldsymbol y$ under the above procedure if and only if $E(T)\subseteq E(G_1(\boldsymbol y))$ (equivalently, $G_1(\boldsymbol y)$ contains $T$). Hence, if $G_1(\boldsymbol y)$ is connected, exactly $\tau(G_1(\boldsymbol y))$ of the $k^{k-2}$ labeled trees can generate $\boldsymbol y$, where $\tau(\cdot)$ is the number of spanning trees in the graph. The edge displacements are independent. Conditional on the previously placed points, each child is uniformly distributed in the unit disk centered at its parent, with density $1/\pi$. As a result, the joint density is $\pi^{-(k-1)}$ everywhere; conditional on a particular tree $T$, the density of $(Y_2,\ldots,Y_k)$ at $(y_2,\ldots,y_k)$ is 
    $$f_{Y_2,\ldots,Y_k}(y_2,\ldots,y_k) = \pi^{-(k-1)}\mathbbm 1(E(T)\subseteq E(G_1(\boldsymbol{y}))).$$ 
    By enumerating all possible choices of $T$, we find that the density function of $\boldsymbol{Y}$ can be formulated as:
	\begin{align}\label{eq:tree-density}
		f_k(\boldsymbol y) &=\sum_{T} \pi^{-(k-1)}\mathbbm 1(E(T)\subseteq E(G_1(\boldsymbol{y}))) \cdot \frac{1}{k^{k-2}}  = \frac{\tau(G_1(\boldsymbol y))}{k^{k-2}\pi^{k-1}} \mathbbm{1}(G_1(\boldsymbol y)\text{ is connected}),
	\end{align}
	where the last equation holds because $G_1(\boldsymbol{y})$ is connected if and only if it contains at least one spanning tree.

	Now we are able to define $W_k$. Let
	\begin{align}\label{eq:Wk}
		W_k=\frac{\Gamma(k-\tfrac12)}{2k!}\pi^{k-1}k^{k-2}\frac{g(G_1(\boldsymbol Y))}{\tau(G_1(\boldsymbol Y))A(\boldsymbol Y)^{k-1/2}}.
	\end{align}
	By Equation \eqref{eq:tree-density}, we obtain
	\begin{align*}
		\E[W_k] &= \int_{(\R^2)^{k-1}} \frac{\Gamma(k-\tfrac12)}{2k!}\pi^{k-1}k^{k-2}\frac{g(G_1(\boldsymbol y))}{\tau(G_1(\boldsymbol y))A(\boldsymbol y)^{k-1/2}} f_k(\boldsymbol y) \d y_2\cdots\d y_k \\
		&= \frac{\Gamma(k-\tfrac12)}{2k!}\int_{(\R^2)^{k-1}}\frac{g(G_1(\boldsymbol y))}{A(\boldsymbol y)^{k-1/2}}\d y_2\cdots\d y_k =C_k.
	\end{align*}
	Here the integrand is defined to be zero wherever $f_k(\boldsymbol y)=0$. For each $k$, fix before sampling an integer $n_k\geq2$ and a clipping level $M_k>0$. Let $W_{k,1},\ldots,W_{k,n_k}$ be independent copies of $W_k$, $X_{k,i}=\min\{W_{k,i},M_k\}$, and define
	\[
	\overline X_k=\frac1{n_k}\sum_{i=1}^{n_k}X_{k,i},\quad s_k^2=\frac1{n_k-1}\sum_{i=1}^{n_k}(X_{k,i}-\overline X_k)^2.
	\]
	By definition, we have $0\leq X_{k,i}\leq M_k$ and $\E [X_{k,i}]\leq\E [W_k]=C_k$. Applying the Empirical Bernstein Inequality \citep{MaurerPontil2009} to $1-X_{k,i}/M_k$ yields,
	with probability at least $1-\delta_k$,
	\begin{align}\label{eq:ell-k}
		C_k\geq\ell_k:=\max\left\{0,\overline X_k- \sqrt{\frac{2s_k^2\log(2/\delta_k)}{n_k}} -\frac{7M_k\log(2/\delta_k)}{3(n_k-1)}\right\}.
	\end{align}
	With this inequality, we can establish the desired result.
	
	\begin{proposition}\label{prop:lower-hp}
		Let $0<\delta<1$ and set $\delta_k=\delta/8$. Then
		\begin{align}\label{eq:lower-hp}
			\Pr\left(\beta\geq\frac58+\sum_{k=3}^{10}\ell_k\right)\geq1-\delta.
		\end{align}
	\end{proposition}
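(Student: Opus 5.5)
The plan is to combine Theorem~\ref{thm:lower-master} with $K=10$ and a union bound over the eight confidence statements \eqref{eq:ell-k} for $k=3,\ldots,10$. Theorem~\ref{thm:lower-master} gives, deterministically, $\beta\geq\frac58+\sum_{k=3}^{10}C_k$. So it suffices to show that the event $E=\bigcap_{k=3}^{10}\{C_k\geq\ell_k\}$ has probability at least $1-\delta$. On $E$ we have $\frac58+\sum_{k=3}^{10}C_k\geq\frac58+\sum_{k=3}^{10}\ell_k$, and the claimed inequality follows.

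The first step is to justify \eqref{eq:ell-k} for each fixed $k$ with failure probability at most $\delta_k=\delta/8$. The variables $Z_i=1-X_{k,i}/M_k$ are i.i.d. and take values in $[0,1]$. This holds because $W_k\geq0$: we have $g\geq0$ by Lemma~\ref{lem:deficiency}, $\tau\geq1$ on the support since $G_1(\boldsymbol Y)$ is connected, and $A>0$. Also $n_k$ and $M_k$ are fixed before sampling.

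The Maurer--Pontil empirical Bernstein inequality then states that, with probability at least $1-\delta_k$,
\[
\E Z\leq \overline Z+\sqrt{\frac{2V_n\log(2/\delta_k)}{n_k}}+\frac{7\log(2/\delta_k)}{3(n_k-1)},
\]
where $V_n$ is the sample variance of the $Z_i$. Since $\overline Z=1-\overline X_k/M_k$ and $V_n=s_k^2/M_k^2$, multiplying by $M_k$ and rearranging gives
\[
\E X_{k,1}\geq \overline X_k-\sqrt{\frac{2s_k^2\log(2/\delta_k)}{n_k}}-\frac{7M_k\log(2/\delta_k)}{3(n_k-1)}.
\]
Clipping only decreases the mean, so $\E X_{k,1}\leq \E W_k=C_k$; the identity $\E W_k=C_k$ was verified above via the density \eqref{eq:tree-density}. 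Together with $C_k\geq0$, which handles the $\max\{0,\cdot\}$, this yields $C_k\geq\ell_k$ on an event of probability at least $1-\delta/8$.

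The second step is the union bound. There are exactly eight indices $k=3,\ldots,10$, so
\[
\Pr(E^c)\leq\sum_{k=3}^{10}\Pr(C_k<\ell_k)\leq 8\cdot\frac{\delta}{8}=\delta.
\]
Independence across $k$ is not needed.

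The main point requiring care is the first step: confirming that the hypotheses of the empirical Bernstein inequality hold exactly. The samples must be i.i.d. and $[0,1]$-valued, and the sample size and clipping level must be nonrandom. The direction of the inequality must also come out right after the transformation $Z=1-X/M_k$. With the transformation, the inequality's one-sided upper bound on the mean of $Z$ becomes a lower bound on $\E X_{k,1}$, which is the direction needed. The remaining steps are routine.
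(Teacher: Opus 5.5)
Your proposal is correct and matches the paper's proof: apply Theorem~\ref{thm:lower-master} with $K=10$, take a union bound over the eight events $\{C_k\geq\ell_k\}$, each failing with probability at most $\delta/8$, and chain the inequalities on their intersection. Your extra verification of the empirical Bernstein step (the transformation $1-X_{k,i}/M_k$, the fact that clipping can only lower the mean, and $C_k\geq 0$ covering the $\max$) simply spells out what the paper establishes in \eqref{eq:ell-k} before the proposition.
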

	
	\proof{Proof.}
	For each $k$, the event $C_k\geq\ell_k$ fails with probability at most $\delta/8$. By the union bound, all eight inequalities hold simultaneously with probability at least $1-\sum_{k=3}^{10}\delta_k=1-\delta$. On this event, we have \[\beta\geq \frac58+\sum_{k=3}^{10}C_k\geq\frac58+\sum_{k=3}^{10}\ell_k,\]
	which completes the proof.
	\Halmos
	\endproof
	
	With Proposition \ref{prop:lower-hp}, we can obtain the high-probability lower bound by computing $\ell_k$. With $n_k=5\times 10^4$, $\delta=10^{-4}$, and the clipping levels in Table~\ref{tab:numerics}, we obtain 
    \[\frac58+\sum_{k=3}^{10}\ell_k\geq0.6536702342>0.6536.\] 
    Hence $\beta\geq 0.6536$ with probability at least $1-10^{-4}$.

	\begin{table}[!t]
		\centering
		\footnotesize
		\renewcommand{\arraystretch}{0.9}
		\setlength{\tabcolsep}{4pt}
		\caption{Reported lower-bound calculations. Component entries are rounded to four decimals; the bounds in the last row are computed from the unrounded values.}\label{tab:numerics}
		\begin{tabular}{lrrrr}
			\toprule
			Deterministic term & Reported bound & $k$ & $M_k$ & $\ell_k$ \\
			\midrule
            $5/8$ & $0.6250$         & 3  & $0.050$ & $0.01147891282$ \\
            $C_3$ & $0.01113528859$  & 4  & $0.020$ & $0.006317757461$ \\
            $C_4$ & $0.005040573276$ & 5  & $0.025$ & $0.003819200575$ \\
            $C_5$ & $0.001015487669$ & 6  & $0.030$ & $0.002518774817$ \\
            && 7  & $0.040$ & $0.001721740355$ \\
            && 8  & $0.040$ & $0.001218625047$ \\
            && 9  & $0.040$ & $0.0009144452461$ \\
            && 10 & $0.040$ & $0.0006807778828$ \\
			\midrule
			Lower bound for $\beta$ & $0.6421913495$ &&& $0.6536702342$ \\
			\bottomrule
		\end{tabular}
	\end{table}

	\section{Derivation of the Upper Bound}\label{sec:upper}
	
	For an upper bound, it suffices to construct a feasible tour and bound its length. We consider a zigzag-style construction, which divides the square region into horizontal strips and connects the points in each strip, similar to \cite{CarlssonYu2025}.

	\subsection{Tour construction}\label{subsec:upper-strip}
	
	Fix a strip height $H>0$ and consider the unit-intensity Poisson process in $[0,\infty)\times[0,H]$. Projecting the points onto the horizontal axis yields a one-dimensional Poisson process with rate $H$. Hence, after ordering the points by their horizontal coordinates, the distances between consecutive horizontal coordinates are independent $\Exp(H)$ random variables. The heights of the points are independent $\Unif[0,H]$ variables, independent of the horizontal distances.

\begin{figure}[t]
\centering
\FIGURE
{\scalebox{1.1}{\begin{tikzpicture}[
  x=1.2cm, y=1.2cm,
  font=\footnotesize,
  tour/.style={
    draw=tourblue, line width=1.1pt,
    line cap=round, line join=round
  },
  pt/.style={
    circle, fill=black, inner sep=0pt, minimum size=3.2pt
  },
  endpt/.style={
    rectangle, draw=black, line width=0.6pt,
    inner sep=0pt, minimum size=7pt
  },
  midarrow/.style={
  postaction={decorate},
  decoration={
    markings,
    mark=at position 0.58 with {
      \fill[tourblue, draw=none]
        (1.8pt,0pt) -- (-1.8pt,1.35pt)
        -- (-1.8pt,-1.35pt) -- cycle;
    }
  }
},
  guide/.style={
    draw=black!35, line width=0.4pt,
    dash pattern=on 1pt off 1.2pt
  },
  dim/.style={draw=black!55, line width=0.45pt, <->}
]
  \coordinate (Q0)  at (0,0.4);
  \coordinate (Q1)  at (0.5,1.7);
  \coordinate (Q2)  at (0.65,0.3);
  \coordinate (Q3)  at (1.2,1.6);
  \coordinate (Q4)  at (1.55,0.9);
  \coordinate (Q5)  at (2.1,1.2);
  \coordinate (Q6)  at (2.6,1.55);
  \coordinate (Q7)  at (3.05,1.05);
  \coordinate (Q8)  at (3.55,0.6);
  \coordinate (Q9)  at (4.0,0.95);
  \coordinate (Q10) at (4.5,0.5);
  \foreach \yy in {0,2} {
    \draw[black!50, line width=0.5pt]
      (-0.45,\yy) -- (4.95,\yy);
    \draw[black!50, line width=0.5pt,
      dash pattern=on 1pt off 1.5pt]
      (-0.80,\yy) -- (-0.45,\yy);
    \draw[black!50, line width=0.5pt,
      dash pattern=on 1pt off 1.5pt]
      (4.95,\yy) -- (5.30,\yy);
  }
  \draw[dim] (-0.25,0.03) -- (-0.25,1.97)
    node[midway, left=1pt] {$H$};
  \draw[guide] (1.55,0) -- (1.55,-0.36);
  \draw[guide] (2.1,1.12) -- (2.1,-0.36);
  \draw[dim] (1.57,-0.28) -- (2.08,-0.28)
    node[midway, below=1pt] {$\Delta_5$};
  \draw[dim] (1.55,0.03) -- (1.55,0.84)
    node[midway, right=0.5pt] {$Y_4$};
\draw[tour, midarrow] (Q0) -- (Q2);
\draw[tour, midarrow] (Q2) -- (Q1);
\draw[tour] (Q1) -- (Q3);
\draw[tour] (Q3) -- (Q4);
\draw[tour, midarrow] (Q4) -- (Q5);
\draw[tour] (Q5) -- (Q6);
\draw[tour] (Q6) -- (Q7);
\draw[tour] (Q7) -- (Q8);
\draw[tour, midarrow] (Q8) -- (Q9);
\draw[tour] (Q9) -- (Q10);
  \foreach \q in {Q0,Q1,Q2,Q3,Q4,Q5,Q6,Q7,Q8,Q9,Q10}
    \node[pt] at (\q) {};
  \foreach \q in {Q0,Q5,Q10}
    \node[endpt] at (\q) {};
  \node[anchor=north] at ($(Q0)+(0,-0.04)$) {$P_0$};
  \node[anchor=east] at ($(Q1)+(0.07,0.02)$) {$P_1$};
  \node[anchor=west] at ($(Q2)+(-0.06,-0.08)$) {$P_2$};
  \node[anchor=west] at ($(Q3)+(-0.02,-0.02)$) {$P_3$};
  \node[anchor=north east] at ($(Q4)+(0.06,0.06)$) {$P_4$};
  \node[anchor=south east] at ($(Q5)+(0.25,0.12)$) {$P_5$};
  \node[anchor=north] at ($(Q10)+(0,-0.09)$) {$P_{10}$};
  \draw[black!50, line width=0.5pt]
    (0,2.12) -- (0,2.26) -- (2.1,2.26) -- (2.1,2.12);
  \draw[black!50, line width=0.5pt]
    (2.1,2.26) -- (4.5,2.26) -- (4.5,2.12);
  \node[anchor=south] at (1.05,2.26) {block 1};
  \node[anchor=south] at (3.30,2.26) {block 2};
  \draw[tour, midarrow] (-0.70,-1.49) -- (-0.30,-1.49)
    node[right=2pt, text=black] {visiting order};
  \node[endpt] at (2.45,-1.49) {};
  \node[pt] at (2.45,-1.49) {};
  \node[right=2pt] at (2.55,-1.49) {shared endpoint};
\end{tikzpicture}}}
{An illustration of the upper bound construction.
\label{fig:upper-construction}}
{Two consecutive blocks in a strip. Each block visits five
new points and ends at the starting point of the next block,
here $P_5$. Arrows show the visiting order selected by the
routing rule. $\Delta_5$ and $Y_4$ illustrate a horizontal
distance and height.}
\end{figure}

	Consider six consecutive points according to their horizontal coordinate values $P_i$ for $i=0,\ldots,5$ (we refer to them as a block). Assume that the leftmost horizontal coordinate is zero, i.e., $P_0=(0,Y_0)$, where $Y_0\sim\Unif[0,H]$. Let their horizontal distances be $\Delta_1,\ldots,\Delta_5$, which implies that
	\begin{align}\label{eq:block-points}
		P_i=(\Delta_1+\cdots+\Delta_i,Y_i),\quad i=1,\ldots,5.
	\end{align}
	Now we consider a path that starts at $P_0$, visits $P_1,P_2,P_3,P_4$ in one of their possible orders, and ends at $P_5$. For a permutation $\pi$ of $\{1,2,3,4\}$, define
	\begin{align*}
		L_\pi=\|P_0-P_{\pi(1)}\|_2+\|P_{\pi(1)}-P_{\pi(2)}\|_2 +\|P_{\pi(2)}-P_{\pi(3)}\|_2+\|P_{\pi(3)}-P_{\pi(4)}\|_2+\|P_{\pi(4)}-P_5\|_2.
	\end{align*}
	A \emph{routing rule} $\mathcal A$ selects one of the $24$ visiting orders as a function of the block's five horizontal distances and six vertical coordinates. We use $C_H^{\mathcal A}$ to denote the resulting path length. The endpoint $P_5$ is then used as the starting point for the next five points, so consecutive paths share a common endpoint and require no additional connecting edge. Figure~\ref{fig:upper-construction} provides an illustration for this strip construction. Then we are able to derive the following results, which establish an upper bound for $\beta$ in terms of $C_H^\cA$. It is worth noting that $C_H^\cA$ has a finite expectation because every such route has length at most $5(\Delta_1+\cdots+\Delta_5)+5H$.
	
	\begin{theorem}\label{thm:strip-upper}
		For every routing rule $\mathcal A$, we have $$\beta\leq   \frac{\E [C_H^{\mathcal A}]}{5}.$$
	\end{theorem}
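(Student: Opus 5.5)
We will construct an explicit tour of $\cP\cap Q_t$ from strips and use \eqref{eq:poisson}. Fix $H>0$ and the routing rule $\cA$. Partition $Q_t$ into $m=\lceil t/H\rceil$ horizontal strips of height $H$; the last strip may be thinner, and we enlarge it to height $H$ with an independent unit-intensity Poisson process outside $Q_t$. Adding points can only lengthen the optimal tour by at most a boundary-order amount if we shortcut them, so this does not matter. Within each strip, order the points by horizontal coordinate. Group them into consecutive blocks $P_0,\ldots,P_5$, with the last point of one block serving as the first point of the next. Apply $\cA$ in each block, and traverse the strips alternately left-to-right and right-to-left, joining the end of one strip to the start of the next by a segment. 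At the ends of each strip, at most four leftover points are visited in horizontal order at a cost of $O(H+\text{local spacing})$. Deleting the extra points by the triangle inequality gives a Hamiltonian cycle on $\cP\cap Q_t$ whose length is at most the sum of all block path lengths plus $O(m(t^{0}+H))$ plus the lengths of the connecting edges. The connecting edges contribute $O(mH+t)$ in total, since consecutive strips are joined by edges of length at most about $2H$ plus the horizontal gaps to the ends.

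The key probabilistic step is to compute the expected total block length in one strip. To avoid boundary effects, extend the strip to $[0,\infty)\times[0,H]$. Start the block decomposition at the first point after $0$. By the strong Markov (renewal) property of the projected rate-$H$ Poisson process, each block's data $(\Delta_1,\ldots,\Delta_5,Y_0,\ldots,Y_5)$ consists of i.i.d.\ $\Exp(H)$ gaps and i.i.d.\ uniform heights, independent across blocks, except that the blocks overlap in one point. Because $\cA$ depends only on block data, the block lengths form a stationary $1$-dependent sequence with mean $\E[C_H^\cA]$ per $5$ points. Each block advances horizontally by a sum of five $\Exp(H)$ variables, so the number $N_t$ of blocks needed to cover $[0,t]$ satisfies $\E N_t\le tH/5+1$ by Wald's identity, or the elementary renewal theorem. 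Wald's identity also applies to the block lengths with the stopping time $N_t$, because $N_t$ is a stopping time for the block filtration. Hence the expected block length in one strip is at most $(tH/5+1)\E[C_H^\cA]$. This expectation is finite by the bound $5\sum\Delta_i+5H$.

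Summing over $m\approx t/H$ strips gives
\[
\E[\TSP(\cP\cap Q_t)]\le \frac tH\Bigl(\frac{tH}5+1\Bigr)\E[C_H^\cA]+O\Bigl(\frac{t}{H}\cdot H+t\Bigr).
\]
Dividing by $t^2$ and letting $t\to\infty$ via \eqref{eq:poisson} yields $\beta\le \E[C_H^\cA]/5$.

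The main obstacle is the bookkeeping at strip boundaries and the justification of Wald's identity for the block sequence. The first block's left endpoint in the extended strip is not at $0$, and the final block may overshoot $t$, so one must check that these introduce only $O(1)$ blocks per strip. One must also check that the overshoot points, which lie outside $Q_t$, can be shortcut. I would handle both by the triangle inequality: removing points never increases a tour length. Since each overshoot costs $O(1)$ expected length per strip, the total is $O(t/H)=o(t^2)$.
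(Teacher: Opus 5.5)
Your proposal is correct and follows essentially the same route as the paper: strips of height $H$ extended to a half-infinite strip, blocks of five points sharing endpoints, an expected block count of at most $1+tH/5$, an $O(1/H)$ expected overshoot from memorylessness, shortcutting of points outside the square, and alternating strip directions. Starting the first block at the first Poisson point, rather than at the paper's auxiliary uniform-height point at $x=0$, is an inessential variation.

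One justification should be tightened. Consecutive blocks share an endpoint, so the block lengths are only $1$-dependent, and being a stopping time for the block filtration does not by itself give Wald's identity. The correct reason, which the paper uses, is that $\{N_t\geq n\}$ is determined by the horizontal gaps of earlier blocks. It is therefore independent of the $n$th block's length, including that block's shared starting height.
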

	
	\proof{Proof.}
	We divide $Q_{mH}=[0,mH]^2$ into the $m$ horizontal strips $[0,mH]\times[(j-1)H,jH]$ ($j=1,\ldots,m$). We first construct a path in each strip from left to right and later reverse alternate paths. We extend the Poisson process beyond $Q_{mH}$. For each strip $j$, we introduce an auxiliary point $P_0^j=(0,(j-1)H+Y_0^j)$, where $Y_0^j\sim\Unif[0,H]$ is independent of all Poisson points. Label the actual Poisson points in this strip in increasing horizontal order as $P_1^j,P_2^j,\ldots,P_{K_m^j}^j,\ldots$, where $K_m^j$ is the number of Poisson points in the strip.
	
	For $t\geq1$, block $t$ starts at $P_{5(t-1)}^j$ and visits the next five points. Let $S_t^j$ be the horizontal coordinate of its starting point, with $S_1^j=0$, and let $C_t^j$ be its path length under $\mathcal A$. Since the distances between consecutive horizontal coordinates and their heights form two independent i.i.d. sequences, the path length $C_t^j$ of each block has the same distribution as $C_H^{\mathcal A}$. Then the number of blocks with $S_t^j<mH$ is $N_m^j=1+\lfloor K_m^j/5\rfloor,$ 
    which implies that 
    $$\E[N_m^j] =\E\left[1+\Big\lfloor \frac{K_m^j}{5}\Big\rfloor\right] \leq 1+\frac{mH^2}{5}.$$
	
	Moreover, by definition, we note that the event $\{S_t^j<mH\}$ and $C_t^j$ are independent. This is because $S_t^j$ depends only on the horizontal distances preceding block $t$ of strip $j$, whereas $C_t^j$ depends on the next five horizontal distances and the heights of the points in block $t$ of strip $j$, all of which are independent of the preceding distances. By Tonelli's theorem, we have
	\begin{align}\label{eq:block-sum}
		\E\left[\sum_{t:S_t^j<mH}C_t^j\right] =\sum_{t\geq1}\Pr(S_t^j<mH)\E[C_t^j]=\E[N_m^j]\E[C_H^{\mathcal A}]\leq\left(1+\frac{mH^2}{5}\right)\E[C_H^{\mathcal A}].
	\end{align}
	The last block may reach beyond $x=mH$. By the memoryless property of the exponential distributions, the distance from this boundary to the next arrival has mean $1/H$, and at most four further horizontal distances are needed to finish that block. Hence the horizontal coordinate of the endpoint of the last block exceeds $mH$ by at most $5/H$ in expectation. Add straight lines that connect $(0,(j-1)H)$ to $P_0^j$ and connect the last block endpoint to $(mH,(j-1)H)$. Hence, these lengths have expected length at most $2H+5/H$. As a result, by the above construction, we obtain a path through all Poisson points with an expected length at most
	\[
	\left(1+\frac{mH^2}{5}\right)\E[C_H^{\mathcal A}]+2H+\frac5H.
	\]
	This is because removing the auxiliary point and all Poisson points beyond $x=mH$ can further shorten the length. Reverse the path in the adjacent strip, for which the two endpoints can be connected at cost no larger than $2H$. Finally, the additional distance required to form a closed tour is at most $\sqrt2mH$. The resulting tour $\mathcal H_m$ through the points in $Q_{mH}$ therefore satisfies
	\[
	\E[\ell(\mathcal H_m)]\leq m\left[\left(1+\frac{mH^2}{5}\right)\E[C_H^{\mathcal A}]+4H+\frac5H\right]+\sqrt2mH.
	\]
	Letting $m$ go to infinity and applying Equation \eqref{eq:poisson}, we have
	\begin{align*}
		\beta &= \lim_{m\to\infty}\frac{\E[\TSP(\cP\cap Q_{mH})]}{(mH)^2}\leq \limsup_{m\to\infty}\frac{\E[\ell(\mathcal H_m)]}{(mH)^2}\\
		&\leq \lim_{m\to\infty} \frac{1}{m^2H^2}\left(m\left(\left(1+\frac{mH^2}{5}\right)\E [C_H^{\mathcal A}] + 4H+\frac{5}{H}\right) + \sqrt{2}mH\right) = \frac15\E [C_H^{\mathcal A}]. \Halmos
	\end{align*}
	\endproof
	In particular, by choosing the shortest path, we have
	\begin{align}\label{eq:ZH}
		\beta\leq \E \left[\frac15\min_\pi L_\pi\right].
	\end{align}

	\subsection{The upper bound}\label{subsec:upper-det}
	
	Theorem~\ref{thm:strip-upper} reduces the problem to calculating $\E[C_H^{\mathcal A}]$, which can be formulated as an eleven-dimensional integral over the five horizontal distances and six vertical coordinates (i.e., $\Delta_1,\ldots,\Delta_5,Y_0,\ldots,Y_5$). Now we attempt to establish a deterministic upper bound for $\E[C_H^{\mathcal A}]$. To this end, we partition the possible locations of the Poisson points into finitely many small subregions and apply the numerical method. Choose integers $N_g,N_y\geq2$. For the horizontal distance between two Poisson points $\Delta\sim\Exp(H)$, define
	\begin{align}\label{eq:gap-bins}
		q_a=\frac1H\log\frac{N_g}{N_g-a}\quad(0\leq a<N_g),\quad I_a=[q_a,q_{a+1})\quad(0\leq a<N_g).
	\end{align}
	Set $q_{N_g}=\infty$. Then the probability that $\Delta\in I_a$ is $1/N_g$. For the vertical values of each Poisson point $Y\sim \Unif[0,H]$, let $J_b=[bH/N_y,(b+1)H/N_y)$ for $b=0,\ldots,N_y-1$, with $H$ included in the last interval. Then the probability that $Y\in J_b$ is $1/N_y$. Since the five horizontal distances and six vertical coordinates are independent, the probability that $(\Delta_1,\ldots,\Delta_5,Y_0,\ldots,Y_5)$ lies in a particular combination of intervals $\sigma = (I_{a_1},\ldots,I_{a_5},J_{b_0},\ldots,J_{b_5})$ is $(N_g^5N_y^6)^{-1}$. 
	
	Now we assume that $(\Delta_1,\ldots,\Delta_5,Y_0,\ldots,Y_5)\in \sigma$ holds and attempt to bound the expected length of the path from above. To this end, we compute the conditional moments of the horizontal distances. For $a=0,\ldots,N_g-1$, let 
    \[m_a=\E[\Delta\mid\Delta\in I_a], \quad s_a=\E[\Delta^2\mid\Delta\in I_a].\] 
    Define $\Phi_H(x)=x^2+2x/H+2/H^2$ for convenience. Then we obtain the following equation from direct computation:
	\begin{align}\label{eq:gap-moments}
		m_a&=(N_g-a)q_a-(N_g-a-1)q_{a+1}+\frac1H, \\
        s_a&=(N_g-a)\Phi_H(q_a)-(N_g-a-1)\Phi_H(q_{a+1}),
	\end{align}
	for $a<N_g-1$. Moreover, we have 
    $$m_{N_g-1}=q_{N_g-1}+\frac1H, s_{N_g-1}=\Phi_H(q_{N_g-1}).$$

	We next derive an upper bound on the conditional expected length of each possible edge $(P_r,P_s)$ for $0\leq r<s\leq5$. By Equation \eqref{eq:gap-moments}, we establish that
	\begin{align}\label{eq:horizontal-second-moment}
		\E\left[\left(\sum_{i=r+1}^{s}\Delta_i\right)^2\middle|\sigma\right]&=\sum_{i=r+1}^{s}\E[\Delta_i^2\mid\sigma]+2\sum_{r+1\leq i<j\leq s}\E[\Delta_i\Delta_j\mid\sigma]\nonumber\\
		&=\sum_{i=r+1}^{s}s_{a_i}+2\sum_{r+1\leq i<j\leq s}m_{a_i}m_{a_j},
	\end{align}
	where the last equation holds because $\Delta_i$ and $\Delta_j$ are independent. Moreover, conditional on $\sigma$, each $Y_j$ is uniformly distributed on $J_{b_j}=[b_jH/N_y,(b_j+1)H/N_y)$. Therefore, we have
	\begin{align*}
		\E[Y_j\mid\sigma]&=\left(b_j+\frac12\right)\frac{H}{N_y},\quad \Var(Y_j\mid\sigma)=\frac{H^2}{12N_y^2}.
	\end{align*}
	Since $Y_r$ and $Y_s$ are independent conditional on $\sigma$, we have
	\begin{align}\label{eq:vertical-second-moment}
		\E[(Y_s-Y_r)^2\mid\sigma] &=\Var(Y_s\mid\sigma)+\Var(Y_r\mid\sigma) +\left(\E[Y_s\mid\sigma]-\E[Y_r\mid\sigma]\right)^2\nonumber\\
        &=\left(\frac{H}{N_y}\right)^2 \left((b_s-b_r)^2+\frac16\right).
	\end{align}
	Combining \eqref{eq:horizontal-second-moment} and \eqref{eq:vertical-second-moment}, we have
	\begin{align}\label{eq:edge-second-moment}
		M_{rs}(\sigma) &:=\E[\|P_s-P_r\|_2^2\mid\sigma] = \E\left[\left(\sum_{i=r+1}^{s}\Delta_i\right)^2+(Y_s-Y_r)^2\Big|\sigma\right] \nonumber \\
		&=\sum_{i=r+1}^{s}s_{a_i} +2\sum_{r+1\leq i<j\leq s}m_{a_i}m_{a_j} +\left(\frac{H}{N_y}\right)^2 \left((b_s-b_r)^2+\frac16\right).
	\end{align}
	For $r>s$, define $M_{rs}(\sigma)=M_{sr}(\sigma)$ by symmetry. For a permutation $\pi$ of $\{1,2,3,4\}$, define
	\begin{align*}
		B_\pi(\sigma) :=\sqrt{M_{0,\pi(1)}(\sigma)} +\sqrt{M_{\pi(1),\pi(2)}(\sigma)} +\sqrt{M_{\pi(2),\pi(3)}(\sigma)} +\sqrt{M_{\pi(3),\pi(4)}(\sigma)}+\sqrt{M_{\pi(4),5}(\sigma)}.
	\end{align*}
	By Jensen's Inequality, we note that 
    $$M_{rs}(\sigma)\geq (\E[\|P_s-P_r\|_2\mid\sigma])^2,$$ 
    which implies that $\E[L_\pi\mid\sigma]\leq B_\pi(\sigma)$. For each cell $\sigma$, define the routing rule $\mathcal A$ as selecting the visiting order $\pi_\sigma$ that minimizes the value of $B_\pi(\sigma)$ among all permutations. This is a routing rule in the sense of Section~\ref{subsec:upper-strip}, and thus Theorem~\ref{thm:strip-upper} applies. Hence, we have 
    \[
        \E[C_H^{\mathcal A}\mid\sigma] =\E[L_{\pi_\sigma}\mid\sigma] \leq B_{\pi_\sigma}(\sigma) =\min_\pi B_\pi(\sigma).
    \]
    Note that each of the $N_g^5N_y^6$ cells has probability $(N_g^5N_y^6)^{-1}$. Applying Theorem \ref{thm:strip-upper} and the above inequality over all cells gives
	\begin{align}\label{eq:finite-cell-upper}
		\beta\leq \frac{1}{5}\E[C_H^{\mathcal A}] \leq\frac{1}{5N_g^5N_y^6}\sum_{\sigma}\min_{\pi}B_\pi(\sigma).
	\end{align}
	We exploit several exact symmetries to reduce the number of cells that must be evaluated. First, a vertical translation to all points does not change any pairwise distance. Second, vertical reflection preserves all pairwise vertical differences. Finally, reversing both the horizontal distances and the vertical coordinate indices also preserves the value of $\min_\pi B_\pi(\sigma)$. For each equivalence class, we only need to evaluate one representative cell, which can reduce the computational complexity.
	
	In our numerical experiments, we take $H=2$, $N_g=12$, and $N_y=20$. Since $q_a$ is irrational, we enclose it by rational numbers. Because the coefficients of $q_a$ and $q_{a+1}$ in \eqref{eq:gap-moments} have opposite signs, $\Phi_H$ is increasing on $[0,\infty)$, and every $m_a$ is positive, replacing $q_a$ by a rational upper bound and $q_{a+1}$ by a rational lower bound yields upper bounds on $m_a$ and $s_a$, and hence on $M_{rs}(\sigma)$ and $B_\pi(\sigma)$. We replace each square root by a rational upper bound with denominator $10^{12}$. By \eqref{eq:finite-cell-upper}, we have
	\[
	\beta\leq \frac{70149051060530822644063354/10^{12}}{5\cdot12^5\cdot20^6} = \frac{70149051060530822644063354}{5\cdot10^{12}\cdot12^5\cdot20^6}<0.8810.
	\]

	\subsection{High-probability upper bound}\label{subsec:upper-hp}
	
	In this section, we establish a high-probability upper bound for $\beta$. Let $Z_H=\min_\pi L_\pi/5$. By Equation \eqref{eq:ZH}, we have $\beta\leq\E[Z_H]$. Since $Z_H$ is unbounded, we fix $B>H$ and consider the identity
    \[ \E[Z_H] =\E[\min\{Z_H,B\}]+\E[(Z_H-B)_+]. \]
    Therefore, to obtain an upper bound for $\E[Z_H]$, it suffices to consider $\E[\min\{Z_H,B\}]$ and $\E[(Z_H-B)_+]$. To begin with, we have the following result, which bound $\E[(Z_H-B)_+]$ from above.
	
	\begin{lemma}\label{lem:upper-tail}
		For every $B>H$, define $x_B=5H(B-H)$. Then
		\begin{align}\label{eq:upper-tail}
			\E[(Z_H-B)_+]\leq \frac{e^{-x_B}}{5H}\left(5+4x_B+\frac{3x_B^2}{2}+\frac{x_B^3}{3}+\frac{x_B^4}{24}\right).
		\end{align}
	\end{lemma}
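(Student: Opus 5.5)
We bound $Z_H$ by the length of one explicit route and then integrate the tail of a Gamma variable. Take the identity order $\pi=\mathrm{id}$, which visits $P_1,\dots,P_4$ left to right. Each edge $\|P_{i-1}-P_i\|_2$ is at most $\Delta_i+|Y_i-Y_{i-1}|\leq\Delta_i+H$. Hence
\[
\min_\pi L_\pi\leq L_{\mathrm{id}}\leq S+5H,\qquad S:=\Delta_1+\cdots+\Delta_5 ,
\]
so $Z_H\leq S/5+H$. Since $x\mapsto(x-B)_+$ is nondecreasing, this gives
\[
\E[(Z_H-B)_+]\leq\E\bigl[(S/5-(B-H))_+\bigr]=\tfrac15\E[(S-c)_+],\qquad c:=5(B-H)>0 .
\]

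The sum $S$ of five independent $\Exp(H)$ variables is $\mathrm{Gamma}(5,H)$. Its tail is
\[
\Pr(S>s)=e^{-Hs}\sum_{j=0}^4\frac{(Hs)^j}{j!}.
\]
We then use $\E[(S-c)_+]=\int_c^\infty\Pr(S>s)\,\d s$. Substituting $u=Hs$ turns this into
\[
\frac1H\int_{Hc}^\infty e^{-u}\sum_{j=0}^4\frac{u^j}{j!}\,\d u .
\]
The lower limit is $Hc=5H(B-H)=x_B$, matching the statement.

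Each term is an upper incomplete gamma integral. The identity $\int_x^\infty e^{-u}u^j/j!\,\d u=e^{-x}\sum_{i=0}^j x^i/i!$ gives
\[
\sum_{j=0}^4\sum_{i=0}^j\frac{x^i}{i!}=\sum_{i=0}^4(5-i)\frac{x^i}{i!}=5+4x+\frac{3x^2}{2}+\frac{2x^3}{6}+\frac{x^4}{24}.
\]
Note that $2x^3/6=x^3/3$. Multiplying by $\frac{e^{-x_B}}{H}\cdot\frac15$ yields exactly \eqref{eq:upper-tail}.

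The only step needing care is the first reduction, which is not really an obstacle. It uses only the crude per-edge bound $\Delta_i+H$ and the monotonicity of $(\cdot-B)_+$. The hypothesis $B>H$ is used only to make $c>0$, so that the tail integral starts at a positive point. The rest is routine Gamma-tail algebra.
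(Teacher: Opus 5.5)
Your proposal is correct and follows essentially the same route as the paper: bound $Z_H$ by the left-to-right route to get $Z_H\le H+S/5$ with $S\sim$ Erlang$(5,H)$, then integrate the Erlang tail from $5(B-H)$. Your explicit incomplete-gamma summation $\sum_{i=0}^4(5-i)x^i/i!$ just spells out the computation that the paper leaves implicit.
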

	
	\proof{Proof.}
	Consider the path that visits the points from left to right: $P_0\to P_1\to P_2\to P_3\to P_4\to P_5$. For its $i$th edge, the Triangle Inequality implies that $\|P_i-P_{i-1}\|_2 \leq\Delta_i+|Y_i-Y_{i-1}|$. The five vertical differences are each at most $H$, so $Z_H\leq H+(\Delta_1+\cdots+\Delta_5)/5$. Let $S=\Delta_1+\cdots+\Delta_5$. Then $S$ has the Erlang distribution with parameters $(5,H)$, and
	\begin{align*}
		\Pr(S>s)=e^{-Hs}\left(1+Hs+\frac{(Hs)^2}{2}+\frac{(Hs)^3}{6}+\frac{(Hs)^4}{24}\right).
	\end{align*}
	Thus, we obtain that
	\begin{align*}
		\E[(Z_H-B)_+] &\leq \E\left[\left(H+\frac{S}{5}-B\right)_+\right] = \int_B^\infty\Pr\left(H+\frac{S}{5}>s\right)\d s  = \frac{1}{5}\int_{5(B-H)}^\infty\Pr\left(S>t\right)\d t\\
		&=\frac{1}{5}\int_{5(B-H)}^\infty\left(e^{-Ht}\left(1+Ht+\frac{(Ht)^2}{2}+\frac{(Ht)^3}{6}+\frac{(Ht)^4}{24}\right)\right)\d t\\
		&=\frac{e^{-x_B}}{5H}\left(5+4x_B+\frac{3x_B^2}{2}+\frac{x_B^3}{3}+\frac{x_B^4}{24}\right),
	\end{align*}
	which completes the proof.
	\Halmos
	\endproof

	With Lemma \ref{lem:upper-tail}, we are now ready to bound $\E[Z_H]$. Fix $H>0$, an integer $n\geq2$, $0<\delta<1$, and a nonempty finite set $\mathcal B\subset(H,\infty)$ before sampling, and let $Z_{H,1},\ldots,Z_{H,n}$ be independent copies of $Z_H$. For $B\in\mathcal B$, let $\overline X_B$ and $s_B^2$ be the sample mean and the sample variance of the clipped values $\min\{Z_{H,1},B\},\ldots,\min\{Z_{H,n},B\}$, that is,
	\[
	\overline X_B=\frac1n\sum_{i=1}^n\min\{Z_{H,i},B\},\qquad s_B^2=\frac1{n-1}\sum_{i=1}^n\bigl(\min\{Z_{H,i},B\}-\overline X_B\bigr)^2.
	\]
	Then we have the following property.
	
	\begin{proposition}\label{prop:upper-hp}
		With probability at least $1-\delta$, we have
		\begin{align}\label{eq:Udelta}
			\beta\leq U_\delta:=\min_{B\in\mathcal B}\left\{ \overline X_B +\sqrt{\frac{2s_B^2\log(2|\mathcal B|/\delta)}{n}}+\frac{7B\log(2|\mathcal B|/\delta)}{3(n-1)}+\frac{e^{-x_B}}{5H}\left(5+4x_B+\frac{3x_B^2}{2}+\frac{x_B^3}{3}+\frac{x_B^4}{24}\right)\right\},
		\end{align}
		where $|\mathcal B|$ is the number of elements of $\mathcal B$.
	\end{proposition}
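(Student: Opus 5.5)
The plan is to combine the decomposition $\E[Z_H]=\E[\min\{Z_H,B\}]+\E[(Z_H-B)_+]$ with a one-sided empirical Bernstein bound for each clipped mean, a union bound over $\mathcal B$, and Lemma~\ref{lem:upper-tail} for the tail. Since $\mathcal B$, $n$, $H$ and $\delta$ are fixed before sampling, each $B\in\mathcal B$ defines a legitimate i.i.d.\ sample $\min\{Z_{H,i},B\}\in[0,B]$. We may take $Z_H\geq0$ because it is a length.

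The first step is to apply the Empirical Bernstein Inequality of \citet{MaurerPontil2009} to the rescaled variables $\min\{Z_{H,i},B\}/B\in[0,1]$. This is the same inequality used for \eqref{eq:ell-k}, now in the upper-deviation direction and with confidence $\delta/|\mathcal B|$. Undoing the scaling by $B$ multiplies the variance term by $B$, since $\sqrt{s^2/B^2}\cdot B=s_B$, and turns the range term into $7B\log(2|\mathcal B|/\delta)/(3(n-1))$. The result is that, with probability at least $1-\delta/|\mathcal B|$,
\[
\E[\min\{Z_H,B\}]\leq \overline X_B+\sqrt{\frac{2s_B^2\log(2|\mathcal B|/\delta)}{n}}+\frac{7B\log(2|\mathcal B|/\delta)}{3(n-1)}.
\]
I take the constant inside the logarithm to match the paper's convention in \eqref{eq:ell-k}, where $\log(2/\delta_k)$ appears. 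The factor $2$ there is conservative relative to the one-sided statement, so the bound remains valid.

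The second step is the union bound over the $|\mathcal B|$ values of $B$. With probability at least $1-\delta$, the displayed inequality holds simultaneously for every $B\in\mathcal B$.

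The third step works on this event. For each $B\in\mathcal B$, we have $B>H$, so Lemma~\ref{lem:upper-tail} applies. It bounds $\E[(Z_H-B)_+]$ by the deterministic last term in \eqref{eq:Udelta}. Adding this to the clipped-mean bound and using \eqref{eq:ZH}, i.e.\ $\beta\leq\E[Z_H]$, gives a valid upper bound on $\beta$ for every $B\in\mathcal B$ at once. Taking the minimum over $B\in\mathcal B$ then yields $\beta\leq U_\delta$.

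The only delicate point is that the minimum over $B$ is data-dependent. This is why the simultaneous (union-bound) event is needed rather than a single-$B$ guarantee. The rest is bookkeeping on the constants in Maurer--Pontil after rescaling by $B$, which I expect to be the main place where care is required.
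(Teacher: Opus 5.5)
Your proposal is correct and follows essentially the same route as the paper: Maurer--Pontil applied to $\min\{Z_{H,i},B\}/B$ at confidence $\delta/|\mathcal B|$, a union bound over $\mathcal B$, Lemma~\ref{lem:upper-tail} for $\E[(Z_H-B)_+]$, and $\beta\leq\E[Z_H]$ from \eqref{eq:ZH}. One small correction to your aside: the $\log(2/\delta)$ in Maurer and Pontil's bound is already part of the one-sided statement, not slack left over from a two-sided version, because the factor $2$ comes from combining Bennett's inequality with a concentration bound for the sample standard deviation; this does not affect the validity of your argument.
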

	
	\proof{Proof.}
	Note that the clipped variables $\min\{Z_{H,i},B\}$ are independent and lie in the closed interval $[0,B]$, and that $\E[Z_H]=\E[\min\{Z_H,B\}]+\E[(Z_H-B)_+]$. Applying the Empirical Bernstein Inequality \citep{MaurerPontil2009} to the variables $\min\{Z_{H,i},B\}/B$, we find that the following inequality holds with probability at least $1-\delta/|\mathcal B|$:
	\begin{align}\label{4}
		\E[\min\{Z_H,B\}]\leq\overline X_B +\sqrt{\frac{2s_B^2\log(2|\mathcal B|/\delta)}{n}}+\frac{7B\log(2|\mathcal B|/\delta)}{3(n-1)}.
	\end{align}
	Combining Lemma~\ref{lem:upper-tail} with the above inequality, we conclude that the following inequality holds with probability at least $1-\delta/|\mathcal B|$:
	\begin{align*}
		\E[Z_H] \leq \overline X_B +\sqrt{\frac{2s_B^2\log(2|\mathcal B|/\delta)}{n}}+\frac{7B\log(2|\mathcal B|/\delta)}{3(n-1)} +\frac{e^{-x_B}}{5H}\left(5+4x_B+\frac{3x_B^2}{2}+\frac{x_B^3}{3}+\frac{x_B^4}{24}\right).
	\end{align*}
	Applying the union bound over all $B\in\mathcal B$, the above inequalities hold with probability at least $1-\delta$. Hence, on this event, we have $\beta\leq\E[Z_H]\leq U_\delta$, which completes the proof.
	\Halmos
	\endproof
	
	With $H=2$, $n=5\times 10^6$, $\delta=10^{-4}$, and $\mathcal B=\{3.00,3.25,3.50,\ldots,5.00\}$, the minimum value is taken when $B=4.25$, for which $x_B=5H(B-H)=22.5$. At this level, the clipped sample mean is $\overline X_B\approx 0.8742562815$, the sample variance is $s_B^2\approx0.0585344834$. Since $|\mathcal B|=9$, we have $\log(2|\mathcal B|/\delta)=\log(180000)$. The sum of the two concentration terms in \eqref{eq:Udelta} is approximately $0.0005562812301$, and the tail term of Lemma~\ref{lem:upper-tail} is approximately $2.5936729064\times10^{-7}$. Therefore, \eqref{eq:Udelta} gives 
    \[U_\delta\approx 0.8742562815 + 0.0005562812301 +2.5936729064\times10^{-7}\approx0.8748128221<0.8749.\] 
    By Proposition \ref{prop:upper-hp}, we have $\Pr(\beta\leq 0.8749)\geq1-10^{-4}$.

	\section{Conclusion}\label{sec:conclusion}

    We have improved the best known bounds on the Euclidean TSP constant from $0.6277\leq\beta\leq0.90367$ to $0.6421\leq\beta\leq0.8810$, and shown that $0.6536\leq\beta\leq0.8749$ holds with probability at least $1-2\times10^{-4}$ with respect to the sampling procedure. The lower bound follows from a new inequality relating tour length to path covers of threshold graphs; the upper bound follows from a strip construction with optimally ordered blocks of five points. Both bounds are evaluated by computer-assisted methods.
	
	Further improvements may come from finer partitions, sharper disk-union area bounds, and larger blocks on the upper side. Closing the remaining distance to the empirical value $\beta\approx0.7124$, however, may require new techniques.

	\section*{Disclosure of AI Use}
	
	The authors used the AI tool GPT-5.6 Sol Pro in preparing this paper. The authors have reviewed and verified all results, and take full responsibility for the content of the paper.

	\section*{Code and Data}

	The Python code and instructions for reproducing the numerical results in this paper are available upon request. No external datasets are required.

	\bibliography{reference}
	\bibliographystyle{informs2014}

    \newpage
    \ECSwitch

\begin{APPENDICES}

\section{Main notation}\label{app_a}
\begin{itemize}[leftmargin=*,labelindent=0pt]
    \item $n$: number of locations.
    \item $X_{1},\ldots,X_{n}$: independent uniform random points in the unit square.
    \item $\TSP(X_{1},\ldots,X_{n})$: length of a shortest Euclidean tour through $X_{1},\ldots,X_{n}$.
    \item $\beta$: the TSP constant.
    \item $\cP$: a Poisson process with unit intensity on the entire plane.
    \item $\cP_n$: a Poisson process with intensity $n$ on the unit square $[0,1]^2$.
    \item $Q_t$: square $[0,t]^2$.
    \item $p(G)$: the minimum number of vertex-disjoint paths covering the vertices of the finite graph $G$.
    \item $N_H(r)$: the number of edges of length greater than $r$ in the Hamiltonian cycle $H$.
    \item $\cC(G)$: the collection of connected components of the graph $G$.
    \item $\operatorname{cc}(G)$: the number of connected components of the graph $G$.
    \item $\tau(G)$: the number of spanning trees of graph $G$.
    \item $G_r(X)$: the graph with vertex set $X$, where the edge set $E$ is:
	\begin{align*}
		E=\left\{\{x_1,x_2\}:x_1,x_2\in X,\ x_1\neq x_2,\ \|x_1-x_2\|_2\leq r\right\}.
	\end{align*}
    \item $\boldsymbol y$: a brevity notation of the collection $y_1,\ldots,y_k$.
    \item $A(\boldsymbol{y})$: the area of the union of unit balls centered at the points of $\boldsymbol{y}$.
    \item $\boldsymbol Y=(Y_1,\ldots,Y_k)$: planar points generated by the tree sampling procedure, with $Y_1=(0,0)$.
    \item $f_k$: density function of the sampled points $Y_2,\ldots,Y_k$.
    \item $B(x,r)$: disk centered at $x$ with radius $r$.
    \item $\Gamma$: the Gamma function.
    \item $X_t$: the Poisson points that lie in $Q_t$, i.e., $X_t = \cP\cap Q_t$.
    \item $T_t$: the length of the shortest TSP tour among $X_t$.
    \item $\|y\|_2$: the $L^2$ norm of $y$.
    \item $C_k$: the integral in the lower bound for $\beta$; see \eqref{eq:Ck}.
    \item $W_k$: an unbiased estimator of $C_k$.
    \item $\ell_k$: lower confidence bound for $C_k$.
    \item $n_k$: number of samples for estimating $C_k$.
    \item $M_k$: the clipping level for estimating $C_k$.
    \item $H$: the strip height.
    \item $\pi$: a permutation of $\{1,2,3,4\}$.
    \item $\cA$: a routing rule that selects one of the $24$ orders as a function of the block's point locations $P_0$ thru $P_5$. Here, $P_1,\ldots,P_4$ are visited between $P_0$ and $P_5$.
    \item $\Delta_i$: horizontal gaps between consecutive block points $P_i$.
    \item $Y_i$: vertical coordinates of the block points $P_i$.
    \item $C_H^\cA$: length of the block path selected by $\mathcal A$.
    \item $Z_H$: shortest block path length divided by five.
    \item $B_\pi$: an upper bound for $\E[L_\pi\mid\sigma]$.
    \item $U_\delta$: upper confidence bound for $\beta$ with failure probability at most $\delta$.
\end{itemize}

\end{APPENDICES}
	
\end{document}

%% file: premeable.tex
\usepackage[utf8]{inputenc}
\usepackage[T1]{fontenc}
\usepackage[english]{babel}
\usepackage{microtype}
\usepackage{mathrsfs,mathtools,accents}
\usepackage{xifthen,enumitem,comment}
\setlist[enumerate,1]{label=\normalfont{(\Roman*)},leftmargin=*}
\usepackage{xcolor}
\usepackage{etoolbox}
\makeatletter
\patchcmd{\env@cases}{1.2}{0.96}{}{}
\makeatother
\usepackage{algorithm,algorithmicx,algpseudocode}
\usepackage[hypertexnames=false]{hyperref}
\hypersetup{%
  colorlinks=true,
  linkcolor={blue!66!black},
  linkbordercolor=red,
  citecolor={blue!66!black},
}
\usepackage{tikz,pgfplots,subfigure}
\usetikzlibrary{matrix,decorations.pathreplacing}

\ifx\argmax\undefined\DeclareMathOperator*{\argmax}{arg\,max}\fi
\ifx\argmin\undefined\DeclareMathOperator*{\argmin}{arg\,min}\fi

\renewcommand{\Pr}{\mathbb{P}}
\providecommand{\E}{\mathbb{E}}
\providecommand{\R}{\mathbb{R}}

\providecommand{\Var}{\mathbb{V}\textrm{ar}}

\providecommand{\cA}{\mathcal{A}}

\providecommand{\cC}{\mathcal{C}}

\providecommand{\cP}{\mathcal{P}}
\providecommand{\cQ}{\mathcal{Q}}